\documentclass[11pt]{article}
\usepackage{amssymb,amsmath,latexsym,amsthm,amsfonts}
\usepackage{mathrsfs}
\usepackage{subfigure}
\usepackage{tikz}
\usepackage[textwidth=2cm]{todonotes}
\usetikzlibrary{patterns}
\pagenumbering{arabic}
\usepackage{hyperref,enumerate}
\usepackage[affil-it]{authblk}
\usepackage{cleveref}

\presetkeys{todonotes}{fancyline, size=\scriptsize}{}

\newtheorem{thm}{Theorem}[section]
\newtheorem{cor}[thm]{Corollary}
\newtheorem{lem}[thm]{Lemma}
\newtheorem{lemma}[thm]{Lemma}
\newtheorem{prop}[thm]{Proposition}
\newtheorem{conj}[thm]{Conjecture}
\newtheorem{obs}[thm]{Lemma}

\newtheorem{problem}[thm]{Problem}

\DeclareMathOperator{\tc}{tc}
\DeclareMathOperator{\ram}{r}
\DeclareMathOperator{\pp}{pp}
\DeclareMathOperator{\cp}{cp}

\newcommand{\C}{\mathcal{C} }

\date{\today}
\author{Sebasti\'an Bustamante and Maya Stein
	%\thanks{Both authors acknowledge support by Millenium Nucleus Information and Coordination in Networks ICM/FIC RC130003.}\thanks{The first author also was supported by CONICYT Doctoral Fellowship 21141116.}$^{,}$
}
\affil{Department of Mathematical Engineering\\ University of Chile}
%\author[1]{
	%$^{*,}$\thanks{The second author also received support by Fondecyt Regular grant 1140766.}$^{,}$
%}

\title{\bf Monochromatic tree covers and Ramsey\\ numbers for set-coloured graphs}

\begin{document}
\maketitle
\thispagestyle{empty}

\begin{abstract}
 We consider a generalisation of the classical Ramsey theory setting to a setting where each of the edges of the underlying host graph is coloured with a {\em set} of colours (instead of just one colour). 
 We give bounds for monochromatic tree covers in this setting, both for an underlying complete graph, and an underlying complete bipartite graph.
 We also discuss a generalisation of Ramsey numbers to our setting and propose some other new directions.
 
  Our results for tree covers in complete graphs imply that  a stronger version of Ryser's conjecture holds for $k$-intersecting $r$-partite $r$-uniform hypergraphs: they have a transversal of size at most $r-k$. (Similar results have been obtained by Kir\'aly et al., see below.) However, we also show that the bound  $r-k$  is not best possible in general.
\end{abstract}

\section{Introduction}
\subsection{Set-colourings}
We consider complete (and complete bipartite) graphs $G$ whose edges are each coloured with a set of $k$ colours, chosen among $r$ colours in total. 
That is, we consider functions $\varphi:E(G)\to {[r]\choose k}$, where ${[r]\choose k}$ is the set of $k$-element subsets of $[r] := \{1, 2, \dots, r\}$.
We call any such $\varphi$ an  $(r,k)$-colouring (so, the usually considered $r$-colourings for Ramsey problems are $(r,1)$-colourings). 
Colourings of this type, and related concepts, appeared in~\cite{XSSL09}, and in~\cite{BGS11,BT79,Hed09}, respectively. 
We consider Ramsey-type problems for $(r,k)$-coloured host graphs.

\subsection{Tree covers in complete graphs}
The first problem we consider is the tree covering problem. 
In the traditional setting~\cite{EGP91, Gya77, HK96}, one is interested in the minimum number $\tc_r(K_n)$ such that each $r$-colouring of $E(K_n)$ admits a cover with $\tc_r(K_n)$ monochromatic trees (not necessarily of the same colour). 
The following conjecture has been put forward by Gy\'arf\'as:

\begin{conj}[Gy\'arf\'as \cite{Gya77}]
 \label{conj:Gya77}
 For all $n\geq 1$, we have $\tc_r(K_n) \leq r-1.$
\end{conj}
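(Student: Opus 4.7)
The plan is induction on $r$. For the base case $r=2$, the standard complement-connectivity argument suffices: in any $2$-colouring of $E(K_n)$, if the graph in colour~$1$ is disconnected, then all edges between two of its components lie in colour~$2$, so the colour-$2$ graph is connected and contains a spanning tree. In either case one monochromatic spanning tree covers $V(K_n)$, giving $\tc_2(K_n)=1=r-1$.

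For $r=3$, I would follow Gy\'arf\'as's original proof: pick a largest monochromatic component $C$, say in colour~$1$. If $C=V(K_n)$ we are done with a single colour-$1$ tree. Otherwise every edge between $C$ and $V(K_n)\setminus C$ lies in colour~$2$ or~$3$, and a bipartite analogue of the complement-connectivity argument applied to this $2$-coloured bipartite graph yields two monochromatic trees (one each in colours $2$ and $3$, say) which together cover both sides, producing a cover by at most $2=r-1$ trees in total.

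For general $r$, the natural extension is to iterate this idea: select a largest monochromatic component, and induct on the bipartite subgraph to its complement, which uses up to $r-1$ colours. This is exactly where the argument breaks down, and this step is the main obstacle; indeed the conjecture remains open in this generality. The difficulty is twofold. First, the natural bipartite analogue of the conjecture, with a bound comparable to $r-1$, is itself delicate and fails for certain bipartite configurations, so any inductive step must track strictly more structural information than just the number of colours used. Second, the extremal examples that show tightness of $r-1$ arise from resolvable designs, in particular affine planes of order $r-1$, which exist only for prime-power values of $r-1$; this suggests that any proof must be sensitive to the arithmetic structure of $r$ and cannot proceed by a purely greedy or local argument. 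A more promising line of attack would be to invoke structural results on the distribution of monochromatic component sizes (for instance ``double-star''-type theorems of Gy\'arf\'as and collaborators) to extract one large monochromatic tree first and then recurse on the uncovered part with one fewer useful colour.
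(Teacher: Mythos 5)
The statement you were asked to prove is labelled as a \emph{conjecture} in the paper, and the paper offers no proof of it: it only records that the bound $r-1$ is trivially true with $r$ in place of $r-1$ (monochromatic stars at a fixed vertex), that it is tight when $r-1$ is a prime power (affine planes), and that the cases $r\le 5$ are known via the equivalence with Ryser's conjecture for intersecting $r$-partite $r$-uniform hypergraphs (Duchet, Gy\'arf\'as). So there is no ``paper proof'' to compare against, and you were right not to manufacture one. Your treatment of $r=2$ is correct, and your sketch for $r=3$ is the standard argument: a largest colour-$1$ component $C$, if not spanning, receives only colours $2,3$ on the edges to $V(K_n)\setminus C$, and the $2$-coloured complete bipartite graph between $C$ and its complement can be covered by two monochromatic trees, which then cover everything.

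One small correction to your diagnosis of why the induction stalls: the bipartite analogue (Conjecture 1.3 in the paper, due to Chen, Fujita, Gy\'arf\'as, Lehel and T\'oth) is not believed to \emph{fail}; it asserts $\tc_{r,1}(K_{n,m})\le 2r-2$ and this is known to be tight and true for $r\le 5$. The real obstruction is quantitative: recursing from $K_n$ to the bipartite graph between a monochromatic component and its complement replaces $r$ by $r-1$ but costs roughly $2(r-1)-2$ trees rather than $r-2$, so the bound degrades by a factor of about $2$ at each step. This is why the case $r=3$ works ($2\cdot 2-2=2=r-1$) but $r\ge 4$ does not follow this way; the known proofs for $r=4,5$ go through Ryser's conjecture for intersecting hypergraphs and are genuinely ad hoc. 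Your closing remark about the arithmetic sensitivity of the extremal examples (affine planes of prime-power order) is consistent with the paper's footnote on tightness and is a fair indication of why no purely local argument is expected to close the general case.
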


Note that this conjecture becomes trivial if we replace $r-1$ with $r$, as for any colouring, all monochromatic stars centered at any fixed vertex cover~$K_n$. 
Also, the conjecture is tight when $r-1$ is a prime power, as we will show in Section~\ref{sec:easy}. 
Conjecture~\ref{conj:Gya77} holds for $r \leq 5$, due to results from Duchet~\cite{Duc79} and Gy\'arf\'as~\cite{Gya77}, through a connection to Ryser's conjecture. 
We shall discuss this connection at the end of the introduction.

In our setting, for a given graph $G$ we define the {\em tree cover number} $\tc_{r,k}(G)$ as the minimum number $m$ such that each $(r,k)$-colouring of $E(G)$ admits a cover with $m$ monochromatic trees. 
In this context, a monochromatic tree in $G$ is a tree $T\subseteq G$ such that there is a colour~$i$ which, for each $e\in E(T)$, belongs to the set of colours assigned to $e$.

Note that deleting $k-1$ fixed colours from all edges, and, if necessary, deleting some more colours from some of the edges, we can produce an $(r-k+1)$-colouring from any given $(r,k)$-colouring. 
So, Conjecture~\ref{conj:Gya77}, if true, implies that $\tc_{r,k}(K_n)\leq r-k$.

\begin{conj}
 \label{conj:comp_r-k}
 For all $n\geq 1$ and $r > k \geq 1$, we have $\tc_{r,k}(K_n) \leq r-k.$
\end{conj}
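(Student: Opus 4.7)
The most direct route is the reduction sketched immediately before the statement: starting from an $(r,k)$-colouring $\varphi$, fix a set $S$ of $k-1$ colours, delete them from every edge, and trim the remaining colour sets to a single element each. This produces an ordinary $(r-k+1)$-colouring $\psi$, and every $\psi$-monochromatic tree is $\varphi$-monochromatic as well. Conjecture~\ref{conj:Gya77} applied to $\psi$ then yields a cover of size at most $(r-k+1)-1 = r-k$. The plan is to formalise this as a lemma, reducing Conjecture~\ref{conj:comp_r-k} to Conjecture~\ref{conj:Gya77}; this immediately settles the cases $r-k \leq 4$ using the known instances of Gy\'arf\'as' conjecture for $r \leq 5$.

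For a direct approach I would induct on $r-k$, with base case $r-k=1$ (so $k=r-1$). Here each edge is missing exactly one colour, giving an auxiliary $r$-colouring $f$ of $E(K_n)$ by the ``missing colour'', and the $\varphi$-colour-$i$ subgraph equals $K_n\setminus f^{-1}(i)$. I claim some $\varphi$-colour subgraph is connected, so that it contains a spanning tree and $r-k=1$ suffices. Indeed, if each $\varphi$-colour-$i$ subgraph were disconnected, witnessed by a nontrivial bipartition $(A_i,B_i)$ whose crossing edges all lie in $f^{-1}(i)$, then different bipartitions would have disjoint sets of crossing edges. Writing $\psi(v)_i = \mathbf{1}[v\in B_i]$, an edge $uv$ crosses bipartition $i$ iff $\psi(u)$ and $\psi(v)$ differ in coordinate $i$, so the disjointness forces the Hamming distance between any two vectors in the image of $\psi$ to be at most $1$. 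A short check shows the image then has at most two distinct points, differing in at most one coordinate, so at most one of the $(A_i,B_i)$ is nontrivial; this contradicts $r\geq 2$.

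The inductive step is the main obstacle, and it is essentially as hard as Ryser's conjecture. There is an equivalence between $(r,k)$-colourings of $K_n$ with a tree cover of size $t$ and $k$-intersecting $r$-partite $r$-uniform hypergraphs with a transversal of size $t$: the hyperedge of $v\in V(K_n)$ records the component of $v$ in each of the $r$ colour subgraphs, and the $k$ colours on edge $uv$ force the hyperedges of $u$ and $v$ to agree in at least $k$ coordinates. Under this equivalence Conjecture~\ref{conj:comp_r-k} becomes the Ryser-type statement that every $k$-intersecting $r$-partite $r$-uniform hypergraph has a transversal of size at most $r-k$, which specialises to Gy\'arf\'as' Conjecture~\ref{conj:Gya77} at $k=1$. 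Any inductive peeling step (e.g., remove a vertex from a smallest part) must exploit the $k$-wise agreement rather than degrading to mere intersection, or else the bound loses $k$ and collapses to the open case $k=1$. Finding such a peel — one that drops $r-k$ by exactly one per step without breaking $k$-intersection on the residual hypergraph — is, I expect, where a direct proof of the full conjecture would stall.
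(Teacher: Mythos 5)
You are right not to claim a proof here: the statement is a \emph{conjecture}, and the paper does not prove it in general either. Your first paragraph is exactly the paper's own remark (formalised as Lemma~\ref{obsred}): deleting $k-1$ fixed colours and trimming reduces Conjecture~\ref{conj:comp_r-k} to Gy\'arf\'as' Conjecture~\ref{conj:Gya77}, and since the latter is known for $r\leq 5$, this does settle all cases with $r-k\leq 4$. Your direct argument for $k=r-1$ is also correct: the ``missing colour'' map $f$ makes the cut edge sets of the hypothetical bipartitions pairwise disjoint, the Hamming-distance-$\leq 1$ condition on $\psi$ forces at most one nontrivial bipartition, and so some colour class is connected. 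And your closing diagnosis --- that the conjecture is equivalent to a transversal bound of $r-k$ for $k$-intersecting $r$-partite $r$-uniform hypergraphs, and that any inductive peel must genuinely use $k$-wise agreement or collapse to the open $k=1$ case --- matches the paper's framing in the introduction and abstract, and is consistent with the state of the art (Kir\'aly proved $k>r/2$, Kir\'aly--T\'othm\'er\'esz later $k>r/4$).

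Where the paper's partial progress differs from yours is in the regime it reaches and the mechanism: Lemmas~\ref{compr-k} and~\ref{compr-k2} confirm the bound for all $k\geq r/2-1$, not just $k=r-1$, and they do so not by an induction on $r-k$ but by fixing one monochromatic component $A$, passing to the complete bipartite graph between $A$ and its complement with its inherited $(r-1,k)$-colouring, and invoking the bipartite tree cover bound of Theorem~\ref{thm:mainBip} (plus a separate ad hoc analysis at the boundary case $k=r/2-1$). So if you want to push beyond $k=r-1$ without resolving Gy\'arf\'as' conjecture, the productive move in this paper is lateral --- trade the complete graph for a complete bipartite one where stronger covering theorems are available --- rather than the vertical peel you describe. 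One further caution on your ``equivalence'': an arbitrary $(r,k)$-colouring need not have complete monochromatic components, so passing to the hypergraph side requires adding colours to edges (leaving the component vertex sets unchanged), which takes you out of the exact-$k$ setting into the at-least-$k$ setting; this is harmless for the transversal bound but worth stating if you formalise the correspondence.
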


Clearly, the bound from Conjecture~\ref{conj:comp_r-k} is tight for $k = r-1$, and it is also tight for  $k = r-2$, as a consequence of Lemma~\ref{frr-2} (see Section~\ref{sec:easy}). 
In~\cite{Kir11}, Kir\'aly proved this bound for $k > r/2$. 
 Lemmas~\ref{compr-k} and \ref{compr-k2} confirm the bound from Conjecture~\ref{conj:comp_r-k}  for $k \geq r/2 - 1$.
After the original version of the present paper was submitted, Kir\'aly and T\'othm\'er\'esz~\cite{KT17} confirmed the bound for $k > r/4$.

But in general, the bound $ r-k$ is not tight.
The smallest example (in terms of $r$ and $k$) corresponds to  $r=5$ and $k=2$, and will be discussed in Section~\ref{sec:comp_gr}.

\begin{thm}\label{thm:52}
 For all $n\geq 4$, we have $\tc_{5,2}(K_n) = 2$.
\end{thm}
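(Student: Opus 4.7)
The theorem has two parts: the lower bound $\tc_{5,2}(K_n)\ge 2$ and the upper bound $\tc_{5,2}(K_n)\le 2$, the latter being the main content.

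For the lower bound, I would exhibit a $(5,2)$-colouring of $K_n$ in which no colour class is both spanning and connected. On $K_4$ with vertices $\{a,b,c,d\}$ I would take the colouring $\varphi(ab)=\{1,2\}$, $\varphi(ac)=\{1,5\}$, $\varphi(bc)=\{1,3\}$, $\varphi(ad)=\{2,4\}$, $\varphi(bd)=\{2,5\}$, $\varphi(cd)=\{3,4\}$: direct inspection shows that $G_1, G_2$ are the triangles $abc$ and $abd$, $G_3, G_4$ are the paths $b\text{-}c\text{-}d$ and $a\text{-}d\text{-}c$, and $G_5$ is the matching $\{ac,bd\}$, none of which spans $K_4$. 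For $n>4$, I would blow up $a$ into a set $A$ of $n-3$ twins, colouring each within-$A$ edge with $\{1,2\}$ and extending all other colours from $K_4$ in the natural way; a direct check (for example, $G_5$ becomes a star at $c$ with leaves $A$, disjoint from the edge $bd$) confirms that no colour class of the extended colouring gains a spanning tree.

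For the upper bound, I would assume that no $G_i$ is spanning and connected, since otherwise a single monochromatic spanning tree already suffices. My approach is to cover $V(K_n)$ by two monochromatic \emph{double stars} centred on a well-chosen edge. Fix an edge $uv$ with $\varphi(uv)=\{a,b\}$ and let $D_a,D_b$ be the double stars consisting of $uv$ together with all colour-$a$ (respectively colour-$b$) edges at $u$ and $v$; both are monochromatic trees. A vertex $w$ is missed by the pair $(D_a,D_b)$ precisely when $\varphi(uw)$ and $\varphi(vw)$ are both contained in $[5]\setminus\{a,b\}$, a $3$-element set. Since any two $2$-subsets of a $3$-set meet, each missed $w$ yields a monochromatic $u\text{-}w\text{-}v$ path of some colour $c\notin\{a,b\}$. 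The plan is either to pick $uv$ so that no vertex is missed, or, when missed vertices are forced, to replace one of the two double stars by a colour-$c$ tree built around the path $u\text{-}w\text{-}v$ together with the colour-$c$ stars at $u$, $v$, and $w$.

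The main obstacle is the case where every candidate edge $uv$ produces missed vertices; here one must leverage the global structure of a $(5,2)$-colouring — in particular that every pair of vertices shares membership in exactly two colour classes, forcing the $10$ possible pairs from $\binom{[5]}{2}$ to distribute tightly over the edges — and carry out a finer case analysis on the colour patterns at $u$, $v$, and the missed vertices to build two suitable monochromatic trees of possibly different colours.
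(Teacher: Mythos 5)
Your lower bound is fine: the explicit $(5,2)$-colouring of $K_4$ and its blow-up do what you claim (the paper instead reuses the $(4,2)$-colouring from Lemma~\ref{frr-2}, viewed as a $(5,2)$-colouring, together with vertex duplication, but your construction is equally valid). The problem is the upper bound, which is the entire content of the theorem, and there your argument stops exactly where the difficulty begins. The double-star setup and the observation that any two $2$-subsets of the $3$-set $[5]\setminus\{a,b\}$ intersect are correct, but the proposed repair --- replacing one double star by a colour-$c$ tree around a path $u$--$w$--$v$ --- does not close the argument: different missed vertices $w,w'$ may force different common colours $c_w\neq c_{w'}$, and even for a single $w$ the new pair of trees (the colour-$a$ double star and the colour-$c$ tree grown at $u,v,w$) can still miss a vertex $x$ with $\varphi(ux),\varphi(vx)\subseteq[5]\setminus\{a,c\}$ and $c\notin\varphi(wx)$, so nothing yet forces a cover. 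You acknowledge this by deferring ``the case where every candidate edge produces missed vertices'' to ``a finer case analysis,'' but that case \emph{is} the theorem; as written the proof has a genuine gap. (A smaller point: your remark that every pair of vertices ``shares membership in exactly two colour classes'' is not right --- they lie in \emph{at least} two common monochromatic components, via the two colours of the edge joining them, and possibly in more via longer paths.)

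For comparison, the paper's upper-bound proof is not constructive at all. It supposes a $(5,2)$-colouring with $\tc=3$ exists, passes to a $3$-critical one, shows by a short local argument (using that every triangle lies in a monochromatic component) that every vertex must see all five colours, and then derives two incompatible bounds on $n$: an injectivity argument for a ``critical function'' $V(K_n)\to\binom{[5]}{2}$ gives $n\le\binom{5}{2}=10$ (Theorem~\ref{mt2}), while the double count of colour occurrences on edges gives $2\binom{n}{2}\le 5\bigl(2+\binom{n-4}{2}\bigr)$, which forces $n>10$ (Lemma~\ref{lbn}). If you want to salvage your constructive route you would need to actually carry out and verify the full case analysis you defer; otherwise the critical-colouring-plus-counting argument is the one known to work.
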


\subsection{Tree covers in complete bipartite graphs}
Tree coverings have also been studied for complete bipartite graphs $K_{n,m}$. 
Chen, Fujita, Gy\'arf\'as, Lehel and T\'oth~\cite{CFGLT12} proposed the following conjecture.

\begin{conj}{$\!\!$\rm\bf\cite{CFGLT12}}
 \label{conj:CFGLT12}
 If $r > 1$ then $\tc_{r,1}(K_{n,m}) \leq 2r-2$, for all $n, m\geq 1$.
\end{conj}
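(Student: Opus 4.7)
My plan is to proceed by induction on $r$. The base case $r = 2$ is classical: two monochromatic trees always suffice to cover $K_{n,m}$, either by exhibiting a spanning connected subgraph of one colour (in which case a single tree is enough), or, when both colour classes are disconnected, by combining a component of each colour into two covering trees.

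For the inductive step, assume the bound $2(r-1)-2 = 2r-4$ for $(r-1,1)$-colourings. Given an $(r,1)$-coloured $K_{n,m}$, the plan is to identify at most two monochromatic trees, say both of colour $r$, whose union covers all vertices outside some set $W$ on which colour $r$ does not appear. Then the inductive hypothesis applied to the $(r-1,1)$-coloured subgraph induced on $W$ supplies $2r-4$ additional trees, giving a total of $2r-2$. A natural candidate for the colour-$r$ pair is a double star centred at a well-chosen colour-$r$ edge $uv$: namely, the colour-$r$ stars at $u$ and at $v$. I would pick $uv$ to maximise the joint colour-$r$ neighbourhood, and, whenever possible, arrange that $W$ lies entirely on one side of the bipartition — which would automatically rule out colour-$r$ edges inside $W$.

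The principal obstacle is precisely this colour-elimination step: in a generic $(r,1)$-colouring there is no reason that, after removing two colour-$r$ trees, the residue $W$ is free of colour~$r$, and if colour-$r$ edges persist inside $W$ the induction stalls. Overcoming this will likely require a finer case analysis — either reselecting the pivot so that $W$ is forced onto one side of the bipartition, or absorbing stray colour-$r$ edges into extensions of the two colour-$r$ trees at the cost of extra bookkeeping. As a softer intermediate target, the greedy bound $\tc_{r,1}(K_{n,m}) \le 2r-1$ is immediate (use one star per appearing colour from a fixed vertex), and the entire difficulty lies in saving the final tree to reach $2r-2$. Such ``last-tree'' savings are the traditional stumbling block in Gy\'arf\'as-type problems, so I expect the bulk of the work — and probably most of a paper dedicated to this conjecture — to live in engineering that single improvement.
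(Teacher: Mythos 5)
First, a point of order: this statement is not a theorem of the paper, and the paper contains no proof of it. It is an open conjecture of Chen, Fujita, Gy\'arf\'as, Lehel and T\'oth, which the paper merely cites; what is known (from \cite{CFGLT12}) is that it holds for $r\leq 5$, that the bound $2r-2$ is tight, and that the weaker bound $\tc_{r,1}(K_{n,m})\leq 2r-1$ --- exactly your ``greedy'' double-star bound --- holds for all $r$. So there is nothing to compare your argument against, and a complete proof along your lines would resolve an open problem rather than reprove a known result.

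As it stands, your proposal is a plan with a gap that you yourself name, and that gap is fatal, not a bookkeeping issue. The inductive step requires two colour-$r$ trees whose union covers every vertex incident with a colour-$r$ edge, so that the residue $W$ spans no colour $r$. There is no reason such a pair exists: colour $r$ may have arbitrarily many components (e.g.\ partition $A=A_1\cup\dots\cup A_t$, $B=B_1\cup\dots\cup B_t$ and let colour $r$ appear exactly on the $A_i$--$B_i$ edges), and then any two of its components leave colour-$r$ edges behind. Your fallback --- forcing $W$ onto one side of the bipartition --- makes matters worse rather than better: the subgraph induced on such a $W$ is edgeless, so the inductive hypothesis for $K_{n',m'}$ gives you nothing, and covering $W$ from within costs $|W|$ singleton trees. (A smaller slip: in the base case $r=2$, when both colour classes are disconnected the classical structure is the double split $A=A_1\cup A_2$, $B=B_1\cup B_2$, and the two covering trees are two components of the \emph{same} colour; one component of each colour does not cover $K_{n,m}$ in general.) In short, the base case and the arithmetic $2+(2r-4)=2r-2$ are fine, but the colour-elimination step is precisely where the whole difficulty of the conjecture lives, and no mechanism for it is supplied. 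If you want a tractable target in the spirit of the paper, note that Theorem~\ref{thm:mainBip} improves the trivial $2r-2k+1$ bound for set-colourings with $k\geq 2$ by entirely different (non-inductive) means.
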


Notice that Conjecture~\ref{conj:CFGLT12} is equivalent to the same statement with $n=m$, since adding copies of some vertex in the smaller part does not modify the tree cover number.
It is shown in~\cite{CFGLT12} that Conjecture~\ref{conj:CFGLT12} is tight; that it is true for $r \leq 5$; and that $\tc_{r,1}(K_{n,m}) \leq 2r-1$ for all $r,n,m\geq 1$.
Thus, in our setting, we can use the argument from above, deleting $k-1$ fixed colours, to see that $\tc_{r,k}(K_{n,m}) \leq 2r-2k+1$ (see Section~\ref{sec:easy} for details). 
But we can do better than this:

\begin{thm}
 \label{thm:mainBip}
 For all $r,k,n,m$,
 \begin{equation*}
  \tc_{r,k}(K_{n,m}) \leq
  \begin{cases}
   r-k+1,   & \text{if}\ k \geq r/2         \\
   2r-3k+1, & \text{if}\ r/2 > k \geq 2r/5 \\
   2r-3k+2, & \text{otherwise.}
  \end{cases}
 \end{equation*}
\end{thm}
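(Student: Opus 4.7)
The plan is to fix an edge $uv$ with $u\in B$, $v\in A$ and let $C^{\ast}=\varphi(uv)$; the main building blocks will be monochromatic \emph{double stars} around $uv$. Writing $N_x^c=\{y:c\in\varphi(xy)\}$, the edges $\{uv\}\cup\{vb:b\in N_v^c\}\cup\{ua:a\in N_u^c\}$ span a monochromatic tree $T_c$ in colour $c$ whenever $c\in C^{\ast}$, covering $\{u,v\}\cup N_v^c\cup N_u^c$. The overall strategy is to select a few such double stars, possibly together with single-centred stars at $u$ or $v$ and one or two extra trees, and verify coverage by pigeonhole.

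For the case $k\geq r/2$, both $\varphi(vb)$ and $C^{\ast}$ are $k$-subsets of $[r]$, so $|\varphi(vb)\cap C^{\ast}|\geq 2k-r$ and hence $|C^{\ast}\setminus\varphi(vb)|\leq r-k$. When $k>r/2$, this lets one pick any $C'\subseteq C^{\ast}$ with $|C'|=r-k+1$ and conclude that $C'\cap\varphi(vb)\neq\emptyset$ for every $b$, so the $r-k+1$ trees $\{T_c:c\in C'\}$ cover $B$; by symmetry they also cover $A$. For the boundary case $k=r/2$ (only possible when $r$ is even) we have $|C^{\ast}|<r-k+1$, so I will use all $k$ double stars plus one further monochromatic tree in a colour $c^{\ast}\in[r]\setminus C^{\ast}$ to absorb the ``bad'' sets $A'=\{a:\varphi(ua)=[r]\setminus C^{\ast}\}\subseteq N_u^{c^{\ast}}$ and $B'=\{b:\varphi(vb)=[r]\setminus C^{\ast}\}\subseteq N_v^{c^{\ast}}$, provided $u$ and $v$ lie in the same $c^{\ast}$-component.

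For $k<r/2$ I employ stars centred at both endpoints. Since $r-k+1\geq k$, one can choose $C_v,C_u\subseteq[r]$ with $C^{\ast}\subseteq C_v\cap C_u$ and $|C_v|=|C_u|=r-k+1$; the same pigeonhole then shows that $\{S_{v,c}:c\in C_v\}$ covers $B$ and $\{S_{u,c}:c\in C_u\}$ covers $A$, where $S_{x,c}$ is the star at $x$ with leaves $N_x^c$. For each $c\in C^{\ast}$ the stars $S_{v,c}$ and $S_{u,c}$ fuse through $uv$ into one tree; a direct count shows that regardless of how $(C_v\cup C_u)\setminus C^{\ast}$ is distributed, the total is $|C_v|+|C_u|-|C^{\ast}|=2r-3k+2$, giving case 3. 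For case 2 ($k\geq 2r/5$), the remaining saving of one tree will come from finding a colour $c\in (C_v\cap C_u)\setminus C^{\ast}$ whose two stars can be joined through a length-$3$ monochromatic path $u-a-b-v$ (with $a\in N_u^c$, $b\in N_v^c$ and $c\in\varphi(ab)$); the density condition $k\geq 2r/5$ should force such a path to exist for some suitable colour.

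I expect the two main obstacles to be the following. First, in the boundary sub-case $k=r/2$ of case 1 one has to show that a \emph{single} extra tree can simultaneously absorb $A'$ and $B'$, which requires that $u$ and $v$ belong to the same $c^{\ast}$-component; this may force a careful initial choice of the edge $uv$ (for instance, one for which $A'$ or $B'$ is already empty). Second, the case-$2$ refinement from $2r-3k+2$ down to $2r-3k+1$ hinges on the stronger bound $k\geq 2r/5$ producing an unavoidable fusion opportunity outside $C^{\ast}$, and establishing this cleanly via an averaging or extremal argument on the $c$-coloured bipartite subgraphs is likely the most delicate point of the proof.
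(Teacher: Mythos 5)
Your treatment of the strict case $k>r/2$ and of the generic bound $2r-3k+2$ is correct and is essentially the paper's own argument: $r-k+1$ stars at each endpoint of a fixed edge, fused along that edge in $\min\{k,r-k+1\}$ colours. But the two points you flag as obstacles are genuine gaps, and in both cases the fix you sketch does not suffice. For $k=r/2$, it can happen that for \emph{every} edge $uv$ both $A'$ and $B'$ are nonempty and $u,v$ lie in different $c^{\ast}$-components for every $c^{\ast}\notin\varphi(uv)$: already $r=2$, $k=1$ with $A=\{a_1,a_2\}$, $B=\{b_1,b_2\}$ and $\varphi(a_ib_j)=\{1\}$ iff $i=j$ is such a colouring (and it scales to general $r=2k$). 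The missing idea is a dichotomy on the uncovered set $U=A'\cup B'$: if some edge between $A'$ and $B'$ carries a colour of $[r]\setminus C^{\ast}$, the two absorbing stars at $u$ and $v$ merge into one tree; if no such edge exists, then every edge between $A'$ and $B'$ has colour set exactly $C^{\ast}$, so $U$ lies in a single monochromatic component in a colour of $C^{\ast}$. Either way one extra tree suffices, giving $k+1=r-k+1$.

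The case $2r/5\le k<r/2$ is the serious gap: your claim that the density condition ``should force'' a colour $c\notin C^{\ast}$ whose two stars are joined by an edge between their leaf sets is unsupported, and the eventual cover need not have the shape ``stars at $u$ and $v$ plus one extra merge'' at all. The paper's proof of this case occupies most of the section. It first establishes (via two auxiliary lemmas) that if some vertex $v$ and some $k$-set $\mathcal{C}$ of colours admit no incident edge coloured exactly $\mathcal{C}$, then one can choose such a $\mathcal{C}$ disjoint from the colour set of some edge at $v$, and an asymmetric star construction ($r-k$ stars at $v$, $r-k+1$ at the other endpoint, $k$ fusions) already yields $2r-3k+1$. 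In the remaining situation every vertex outside the $k$ components through $vw$ is incident with an edge coloured exactly $\varphi(vw)$; this is used to locate a second edge $v'w'$ carrying $2(r-2k)$ colours disjoint from $\varphi(vw)$, and the final cover consists of the $k$ components through $vw$, the $2(r-2k)$ components through $v'w'$, and one last component, obtained after a case analysis on which colours can appear between the still-uncovered parts $A''$ and $B''$ and the vertices $v,w,v',w'$. Your single-edge merging mechanism does not capture this structure, so case 2 remains unproved.
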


For the case $k \geq r/2$, our bound is sharp for large graphs:

\begin{thm}\label{thm:lowerbound}
 For each $r,k$ with $r > k$ there is $m_0$ such that if $n\geq m\geq m_0$ then $\tc_{r,k}(K_{n,m}) \geq \max\{r-k+1,r-k+ \lfloor \frac rk \rfloor -1\}$.
\end{thm}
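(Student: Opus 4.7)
For each $r>k$ and each $n\ge m\ge m_0$ large enough, I will exhibit $(r,k)$-colourings of $K_{n,m}$ whose tree cover numbers attain the two lower bounds in the maximum.

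\emph{The bound $r-k+1$.} Partition $B$ into $N=\binom{r}{k}$ non-empty blocks $(B_S)$ indexed by the $k$-subsets of $[r]$ (this requires $m\ge\binom{r}{k}$) and colour each edge between $A$ and $B_S$ with the $k$-set $S$. For each colour $c\in[r]$, the graph $G_c$ of edges whose colour-set contains $c$ is a complete bipartite graph between $A$ and $\bigcup_{S\ni c}B_S$, hence connected, so any $c$-coloured tree covers at most $A\cup\bigcup_{S\ni c}B_S$. A tree cover with tree-colours $c_1,\dots,c_t$ thus covers all of $B$ iff $\{c_1,\dots,c_t\}$ meets every $k$-subset of $[r]$. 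The minimum such hitting set has size exactly $r-k+1$: any set of $r-k$ colours leaves its complement, a $k$-set, un-hit, while any $(r-k+1)$-subset of $[r]$ meets every $k$-subset. This gives the first bound.

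\emph{The bound $r-k+\lfloor r/k\rfloor-1$.} This is the dominant term only when $k\le r/2$. Set $t=\lfloor r/k\rfloor$ and fix $t$ pairwise disjoint $k$-subsets $T_1,\dots,T_t$ of $[r]$. My plan is to refine the previous construction by additionally partitioning $A=A_1\cup\dots\cup A_t$ and slightly twisting the colour-set assigned to each block $A_i\times B_S$ so that, for each $c\in T_i$, the graph $G_c$ becomes disconnected between $A_i$ and $A\setminus A_i$. Then a single tree of colour $c$ can reach only one side of this split, and in any cover the hitting-set mechanism from the previous part still forces $r-k+1$ trees to cover $B$, while an extra $t-2$ trees are required to reach the ``isolated'' $A_i$'s, giving the bound $r-k+t-1$. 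In the boundary case $k=1$ the bound is $2r-2$, matching the tight construction of Chen--Fujita--Gy\'arf\'as--Lehel--T\'oth~\cite{CFGLT12}, which my construction should recover (and generalise).

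\emph{Main obstacle.} The technical heart lies in Part~2: the colour twist on the cells $A_i\times B_S$ must be chosen so that (i) the hitting-set argument for $B$ remains valid and (ii) the precise disconnection of each $G_c$ across the $A_i$-partition is forced. Once the construction is in place, one must rule out alternative covers that mix trees from colours in different $T_i$'s. I anticipate a case analysis based on how many trees of each class $T_i$ appear in a candidate cover, combined with a coverage count over the blocks $A_i$ and $B_S$; a small inclusion-exclusion or double counting argument should then yield the required bound $r-k+t-1$.
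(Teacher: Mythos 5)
Your first construction is correct and is essentially the paper's Lemma~\ref{cla2.1} with the roles of the two sides swapped: indexing one side by $\binom{[r]}{k}$ and colouring each edge with the index of its endpoint in that side makes every colour class connected, and the minimum cover size is then governed exactly by the hitting-set computation you give. (One small point for a final write-up: trees in a cover may be single vertices, so you should take the blocks $B_S$ large enough that the at most $r-k$ potential singleton trees cannot absorb the uncovered blocks; this is one reason the theorem carries a threshold $m_0$.)

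The second bound, however, is where the real work lies, and your proposal does not contain a proof of it: the ``twist'' of the colour sets on the cells $A_i\times B_S$ is never specified, and you explicitly defer the step that is actually hard, namely showing that no tree can simultaneously serve the hitting-set requirement on $B$ and the connectivity requirement on the $A_i$'s, so that the counts $r-k+1$ and $t-2$ genuinely add. There is no reason to expect such additivity for a generic twist --- a tree in a colour of $T_i$ will typically cover $A_i$ together with a large chunk of $B$ --- and ruling out these savings is the entire content of the lemma. The paper's construction (Lemma~\ref{lem:lowerbound}) is structured quite differently: it takes $|A|=m=\lfloor r/k\rfloor-1$ and lets $B$ be the set of all $m$-tuples $(x_1,\dots,x_m)$ of pairwise disjoint $k$-subsets of $[r]$, colouring the edge from $i\in A$ to $x\in B$ with $x_i$. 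Disjointness of the coordinates forces every monochromatic component to be a star centred in $A$; writing $a_1\le\dots\le a_m$ for the numbers of stars of a putative cover centred at the vertices of $A$, one shows that unless $a_i\ge r-k(m-i+1)+1$ for some $i$ one can build an uncovered vertex of $B$ coordinate by coordinate, and summing the resulting inequalities gives $\sum_j a_j\ge r-k+m$. I would suggest either adopting that construction, or, if you want to pursue your refinement of the $\binom{r}{k}$-block colouring, writing the twist down explicitly and proving the additivity claim --- which I expect to be at least as delicate as the paper's counting argument.
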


Theorems~\ref{thm:mainBip} and~\ref{thm:lowerbound} will be proved in Sections~\ref{sec:upperbip} and~\ref{sec:lowerbip}, respectively.

%For $r>2k$, we do not know the true value of $\tc_{r,k}(K_{n,m})$.
%
%\begin{problem}
% Determine $\tc_{r,k}(K_{n,m})$ for all $r,k,n,m\geq 1$.
%\end{problem}

\subsection{Set-Ramsey numbers}

Classical Ramsey problems naturally extend to $(r,k)$-colourings. 
Define the {\it set-Ramsey number} $\ram_{r,k}(H)$ of a graph $H$ as the smallest~$n$ such that every $(r,k)$-colouring of $K_n$ contains a monochromatic copy of~$H$. 
(As above, a monochromatic subgraph $H$ of $G$ is a subgraph $H\subseteq G$ such that  there is a colour~$i$ that appears on  each $e\in E(H)$.) 
So the usual $r$-colour Ramsey number of $H$ equals $\ram_{r,1}(H)$. Note that $\ram_{r,k}(H)$  is  increasing in $r$ if $H$ and~$k$ are fixed, and decreasing in $k$ if $H$ and $r$ are fixed.

There is a connection between the set-Ramsey number $\ram_{r,k}(H)$ and another  Ramsey-type concept, which was introduced by Erd\H os, Hajnal and Rado in~\cite{EHR65}. 
Let $f_r(H)$ be the  smallest number $n$ such that every $r$-colouring of the edges of~$K_{n}$ contains a copy of $H$ whose edges use at most $r-1$ colours. 
Note that each $(r,r-1)$-colouring $\varphi$ of $K_n$ corresponds to an $r$-colouring $\varphi'$ of~$K_n$, by giving each edge the colour it does {\em not} have in $\varphi$. 
Moreover, observe that $\varphi$  contains a monochromatic copy of $H$  if and only if  $\varphi'$ contains a copy of $H$ that uses at most $r-1$ colours. So $\ram_{r,r-1}(H) = f_r(H)$.

Alon, Erd\H os, Gunderson and Molloy~\cite{AEGM02} study the asymptotic behaviour of $f_{r}(K_n)$. 
See also~\cite{HM99} for related results. 
Chung and Liu~\cite{CL78}, and  Xu et al.~\cite{XSSL09}, study $f_{3}(K_t)=\ram_{3,2}(K_t)$ for small $n$.

We determine $\ram_{4,2}(K_3)$ in Corollary~\ref{42K3}. This makes use of a lower bounds for $\ram_{r,k}(C_\ell)$ for cycles $C_\ell$ of odd length $\ell$ given in Theorem~\ref{r_lowerboundC} (another bound is given in Proposition~\ref{p_dj16}). 

 We also determine for which values of $r,k,t$ we have  $\ram_{r,k}(K_t)=t$ and give upper bounds for $\ram_{r,k}(K_t)$ using Tur\'an's theorem (see Proposition~\ref{boundTuran_f} and the discussion before the proposition). 
All of these results can be found in Section~\ref{sec:ramsey}.

\subsection{Other directions}
In Section~\ref{sec:conc_rem}, we summarize all open problems regarding the topics discuseed so far (tree covers in complete and complete bipartite graphs for set-colourings, and set-Ramsey numbers for complete graphs and for cycles). Furthermore, we propose several new directions that could be studied for set-colourings. Those are tree partition problems, path partition problems, and cycle partition problems. We also include some basic observations. In particular, and perhaps unexpectedly, the cycle partition number for $(3,2)$-coloured complete graphs turns out to be $2$ (see Section~\ref{cycles} for a definition of this number).

\subsection{Tree covers and  Ryser's conjecture}

Finally, let us explain the connection between Conjecture~\ref{conj:Gya77} and Ryser's conjecture~\cite{Hen70}. 
The latter conjecture states that $\tau(\mathcal{H}) \leq (r-1)\nu(\mathcal{H})$ for each $r$-partite $r$-uniform hypergraph $\mathcal{H}$ with $r>1$, where $\tau(\mathcal{H})$ is the size of a smallest transversal (vertex set intersecting every edge) of $\mathcal{H},$ and  $\nu(\mathcal{H})$ is the size of a largest matching in $\mathcal{H}$.

Now, each $r$-partite $r$-uniform hypergraph $\mathcal{H}$ gives rise to a graph~$G$ on vertex set $E(\mathcal{H})$, whose edges are coloured with subsets of colours in~$[r]$: If hyperedges $v,w$ of $\mathcal H$ intersect, say in partition classes $i_1,\ldots , i_\ell$, then the edge~$vw$ of $G$ carries all colours $i_1,\ldots , i_\ell$, and if hyperedges $v,w$ do not intersect, then $vw$ is not an edge of $G$. 
Note that all monochromatic components of $G$ are complete. 
Moreover, this is a $1$-to-$1$ correspondence, as we can also construct from any graph~$G$ coloured in this way a unique (up to isomorphism) $r$-partite $r$-uniform hypergraph $\mathcal{H}$. 
It is easy to observe that $\tau(\mathcal{H})$ equals the minimum number of monochromatic trees covering~$V(G)$.

Because of this correspondence, Conjecture \ref{conj:Gya77} is equivalent to Ryser's conjecture for {\em intersecting} hypergraphs (those with $\nu(\mathcal{H}) = 1$). 
Namely, for these hypergraphs, every two hyperedges intersect, and thus $G$ is complete.
From the given set-colouring, we can get to an $r$-colouring by simply deleting colours on some of the edges (note that it does not matter if this disconnects some of the monochromatic components), and thus, Conjecture \ref{conj:Gya77} implies Ryser's conjecture for  intersecting hypergraphs.
For the other direction, given an $r$-colouring  of $K_n$, we can add colours on some of the edges, making the monochromatic components complete. 
Note that this does not affect the sizes of the monochromatic components.
So, Ryser's conjecture for intersecting hypergraphs implies  Conjecture \ref{conj:Gya77}.

%Similarly, Ryser's conjecture for  {\it $k$-intersecting} hypergraphs (where every two hyperedges intersect in at least $k$ vertices) is equivalent to the statement $\tc_{r,k}(K_n) \leq r-1$. But as
%Theorem~\ref{mt3} gives the stronger bound $\tc_{r,k}(K_n) \leq r-k$, we obtain a stronger version of Ryser's conjecture for these hypergraphs:
%\begin{cor}
% We have $\tau(\mathcal H)\leq r-k$
% for all $r$-partite $r$-uniform $k$-intersecting hypergraphs $\mathcal H$ with $r\leq 2k+2$.
%\end{cor}

\section{$r$-colourings and $(r,k)$-colourings}\label{sec:easy}

This section contains several easy bounds on tree cover numbers for $(r,k)$-colourings, often in terms of bounds on tree cover numbers for $r$-colourings.
We start with the trick mentioned in the introduction.

\begin{obs}
 \label{obsred}
 For every graph $G$, if there exists $f(r)$ such that $\tc_{r,1}(G) \leq f(r)$, then  $\tc_{r,k}(G) \leq f(r-k+1).$
\end{obs}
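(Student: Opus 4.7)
The plan is to formalise the ``colour-deletion trick'' sketched in the introduction. Given any $(r,k)$-colouring $\varphi : E(G) \to \binom{[r]}{k}$, I would first fix an arbitrary set $S \subseteq [r]$ of $k-1$ colours (say $S = \{r-k+2, \ldots, r\}$) and define, for each edge $e$, the reduced colour set $\varphi_1(e) := \varphi(e) \setminus S$. Since $|\varphi(e)| = k$ and $|S| = k-1$, we have $|\varphi_1(e)| \geq 1$, and obviously $\varphi_1(e) \subseteq [r] \setminus S$, a set of size $r-k+1$.

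Next I would select, for each edge $e$, a single colour $\varphi'(e) \in \varphi_1(e)$ arbitrarily. This produces a genuine $(r-k+1)$-colouring (that is, an $(r-k+1,1)$-colouring) $\varphi' : E(G) \to [r]\setminus S$ of $G$. By the hypothesis $\tc_{r-k+1,1}(G) \leq f(r-k+1)$, there exists a family $\mathcal{T}$ of at most $f(r-k+1)$ monochromatic trees (with respect to $\varphi'$) covering $V(G)$.

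The key observation to close the argument is that monochromaticity is preserved under passing back to $\varphi$: if $T \in \mathcal{T}$ is monochromatic in $\varphi'$ with colour $i$, then for each $e \in E(T)$ we have $i = \varphi'(e) \in \varphi_1(e) \subseteq \varphi(e)$, so $T$ is also monochromatic in the original $(r,k)$-colouring $\varphi$. Hence $\mathcal{T}$ is a monochromatic tree cover for $(G,\varphi)$ of size at most $f(r-k+1)$, proving $\tc_{r,k}(G) \leq f(r-k+1)$.

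There is no real obstacle here; the argument is a one-step reduction. The only point that needs care is verifying that each edge still has at least one colour after deleting the fixed set $S$, which is immediate from $|\varphi(e)| - |S| = 1$. The same trick will be invoked repeatedly later in the paper to transfer bounds from the classical $r$-colouring setting to the $(r,k)$-setting.
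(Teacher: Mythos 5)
Your proof is correct and is essentially the paper's own argument: delete a fixed set of $k-1$ colours from every edge, reduce to an ordinary $(r-k+1)$-colouring, and observe that any tree monochromatic in the reduced colouring is still monochromatic in the original set-colouring. No issues.
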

\begin{proof}
 Given any $(r,k)$-colouring $\varphi$ of $G,$ we can construct an edge-colouring $\varphi'$ of $G$ by arbitrarily fixing $k-1$ colours, deleting them from every edge of $G$, and, if necessary, deleting some more colours from the edges until we are left with a $(r-k+1)$-colouring. 
 Each monochromatic component of  $\varphi'$ is contained in a monochromatic component of  $\varphi$.
\end{proof}

So, the trivial upper bound $\tc_{r,1}(K_n)\leq r$ implies that  $\tc_{r,k}(G) \leq r-k+1$, and this bound drops to $r-k$ if Conjecture \ref{conj:Gya77} is true.
Similarly, Conjecture~\ref{conj:CFGLT12}, if true, or the above mentioned bound of $2r-1$ from~\cite{CFGLT12}, combined with Lemma~\ref{obsred}, yield bounds for $ \tc_{r,k}(K_{n,m})$, which, however, are improved by our Theorem~\ref{thm:mainBip}.

For the following lemma, notice that in an $(r,k)$-coloured graph, every set  of  $r-k+1$ colours from $[r]$ contains at least one colour from each edge.

\begin{obs}\label{star}
 Let $\varphi$  be an $(r,k)$-colouring of $K_{1,n}$, and let $\mathcal{C} \subseteq [r]$ with $|\mathcal{C}| =r-k+1$. 
 Then we can cover the vertices of $K_{1,n}$ by $r-k+1$ monochromatic stars, each using a different colour from $\mathcal{C}$.
\end{obs}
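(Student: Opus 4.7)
The plan is to use the pigeonhole observation stated just before the lemma as the whole engine of the proof. Since $\varphi$ assigns to every edge $e$ of $K_{1,n}$ exactly $k$ colours, the complement $[r] \setminus \varphi(e)$ has size $r-k$; any subset $\mathcal{C} \subseteq [r]$ of size $r-k+1$ cannot fit inside this complement, so $\mathcal{C} \cap \varphi(e) \neq \emptyset$ for every edge $e$.

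Given this, I would construct the cover by simply assigning each edge to one of its $\mathcal{C}$-colours. Formally, let $v$ denote the centre of $K_{1,n}$, and for each leaf $u$ of $K_{1,n}$, pick a colour $c(vu) \in \mathcal{C} \cap \varphi(vu)$ arbitrarily. For each colour $c \in \mathcal{C}$, define $S_c$ to be the star centred at $v$ whose leaves are the vertices $u$ with $c(vu)=c$; by construction, every edge of $S_c$ carries colour $c$, so $S_c$ is monochromatic in colour $c$ (treating a star with no assigned leaves as the trivial tree $\{v\}$ if necessary).

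Since every edge receives exactly one colour from $\mathcal{C}$, each leaf of $K_{1,n}$ lies in precisely one $S_c$, while $v$ lies in all of them. Hence the family $\{S_c : c \in \mathcal{C}\}$ consists of $|\mathcal{C}|=r-k+1$ monochromatic stars that jointly cover $V(K_{1,n})$ and use pairwise distinct colours from $\mathcal{C}$, as required.

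There is no real obstacle here: the entire argument is the pigeonhole remark preceding the statement, combined with the trivial observation that assigning each edge to one of its $\mathcal{C}$-colours partitions $E(K_{1,n})$ into monochromatic stars indexed by $\mathcal{C}$. The only point worth being explicit about is that some of the $S_c$ may have no edges, which is why the statement asserts a cover by $r-k+1$ stars rather than exactly $r-k+1$ nontrivial ones.
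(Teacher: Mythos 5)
Your proof is correct and follows exactly the route the paper intends: the paper gives no explicit proof, relying on the pigeonhole remark immediately preceding the statement (every set of $r-k+1$ colours meets the colour set of every edge), which is precisely the engine of your argument. Nothing further is needed.
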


An easy lower bound on $\tc_{r,k}(G)$ can be obtained by splitting colours.

\begin{obs}
 \label{obsred3}
 If there exists a function $f(r)$ such that $\tc_{r,1}(G) \geq f(r)$, then $\tc_{r,k}(G) \geq f(\lfloor r/k\rfloor)$.
\end{obs}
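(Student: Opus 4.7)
The plan is to reduce an $(r,k)$-colouring back to an ordinary colouring by \emph{blowing up} each colour of an optimal $\lfloor r/k\rfloor$-colouring into a block of $k$ distinct colours from $[r]$. Set $r' := \lfloor r/k\rfloor$. By hypothesis there is an $r'$-edge-colouring $\psi\colon E(G)\to[r']$ that cannot be covered by fewer than $f(r')$ monochromatic trees. Since $r'k\leq r$, we may choose pairwise disjoint $k$-sets $S_1,\ldots,S_{r'}\subseteq[r]$, and then define an $(r,k)$-colouring $\varphi$ of $G$ by
\[
  \varphi(e) \;=\; S_{\psi(e)} \qquad \text{for every } e\in E(G).
\]

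The key observation is that monochromatic trees in $\varphi$ are essentially the same objects as monochromatic trees in $\psi$. Concretely, let $T\subseteq G$ be a monochromatic tree in $\varphi$ in some colour $c\in[r]$. If $T$ has at least one edge, then $c\in\varphi(e)=S_{\psi(e)}$ for every $e\in E(T)$; because the $S_i$'s are pairwise disjoint, $c$ lies in a unique $S_i$, and so $\psi(e)=i$ for every $e\in E(T)$. Hence $T$ is a monochromatic tree in $\psi$ (in colour $i$). If instead $T$ is a single vertex, then $T$ is vacuously monochromatic in $\psi$ as well.

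Therefore every cover of $V(G)$ by $m$ monochromatic trees in $\varphi$ yields, using the same subgraphs, a cover of $V(G)$ by at most $m$ monochromatic trees in $\psi$. Taking $m=\tc_{r,k}(G)$, this gives $\tc_{r,k}(G)\geq \tc_{r',1}(G)\geq f(r')=f(\lfloor r/k\rfloor)$, as required.

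There is no real obstacle here; the only mild point to watch is what happens for colours $c\in[r]\setminus\bigcup_i S_i$ (which exist when $r'k<r$), but as noted above any tree in such a colour $c$ is a single vertex and hence translates trivially to a (degenerate) monochromatic tree in $\psi$, so the bound is unaffected.
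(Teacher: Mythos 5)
Your proof is correct and follows essentially the same route as the paper: starting from an extremal $\lfloor r/k\rfloor$-colouring and replacing each colour by a block of $k$ new (pairwise disjoint) colours, then noting that monochromatic components are unchanged. The paper states this in one line; you have merely spelled out the verification, including the harmless degenerate cases.
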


\begin{proof}
 It is enough to observe that, with no effect on the number of monochromatic components needed to cover $G$, we can modify any $r$-colouring  of~$G$  to an $(rk,k)$-colouring  by replacing each colour with  a set of $k$ new colours.
\end{proof}

Let us now see how a given $(r,k)$-colouring of a graph can be extended to a larger graph, without affecting the tree cover number.
To this end, for an $(r,k)$-colouring $\varphi$ of a graph $G$ we define $\tc(G,\varphi)$ as the minimum number of monochromatic trees induced by $\varphi$ needed to cover the vertices of $G$.

\begin{obs}
 \label{obsred4}
 Let  $\varphi$ be an $(r,k)$-colouring of $K_{n,m}$. 
 Then for all $n'\geq n$, $m'\geq m$ there is an $(r,k)$-colouring $\varphi'$ of $K_{n',m'}$ such that  $\tc(K_{n',m'},\varphi') =  \tc(K_{n,m},\varphi)$.
\end{obs}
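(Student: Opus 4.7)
My strategy is to add vertices one at a time as ``twins'' of existing vertices, using a simple induction. By the symmetry between the two parts, it suffices to prove that whenever $\varphi$ is an $(r,k)$-colouring of $K_{n,m}$, there is an $(r,k)$-colouring $\varphi'$ of $K_{n+1,m}$ with $\tc(K_{n+1,m},\varphi') = \tc(K_{n,m},\varphi)$; iterating this (alternately on the two sides) reaches any $(n',m')$ with $n'\geq n$, $m'\geq m$.

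The construction is the obvious one: fix any vertex $v$ in the $n$-part of $K_{n,m}$, introduce a new vertex $v'$ in the same part, and set $\varphi'(v'u) := \varphi(vu)$ for every $u$ in the other part, while keeping $\varphi'$ equal to $\varphi$ on the remaining edges. The crucial property I would isolate first is that the monochromatic components of $\varphi'$ are obtained from those of $\varphi$ simply by adjoining $v'$ to whichever component of each colour contains $v$: for any colour $i\in[r]$, the $i$-neighbourhood of $v'$ in $\varphi'$ equals the $i$-neighbourhood of $v$ in $\varphi$, so $v'$ lies in the same $i$-component as $v$ and adds no new edges between previously separated vertices.

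For the inequality $\tc(K_{n+1,m},\varphi')\leq \tc(K_{n,m},\varphi)$, I would take an optimal monochromatic tree cover $\mathcal{T}$ of $K_{n,m}$. Let $T\in\mathcal{T}$ be a tree of some colour $i$ containing $v$. If $v$ has a neighbour $u$ in $T$, then $\varphi'(v'u)\ni i$, so $T+v'u$ is still a monochromatic tree and now covers $v'$; if instead $T=\{v\}$ is a singleton, replace it by the edge $vu$ with any $u$ in the $m$-part (coloured by any choice from $\varphi(vu)$) and then add $v'u$, costing no extra trees. For the reverse inequality, take an optimal cover $\mathcal{T}'$ of $K_{n+1,m}$; for each $T'\in\mathcal{T}'$ of colour $i$, the component of colour $i$ in $\varphi'$ containing $T'$ is, by the key observation, obtained from a connected colour-$i$ subgraph of $K_{n,m}$ by possibly adjoining $v'$. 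Removing $v'$ therefore yields a connected monochromatic subgraph of $K_{n,m}$ containing $V(T')\cap V(K_{n,m})$; a spanning tree of it is a monochromatic tree in $K_{n,m}$ covering the same vertices. Collecting these over all $T'\in\mathcal{T}'$ yields a cover of $V(K_{n,m})$ of the same size.

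The only delicate point is checking that deleting $v'$ from the colour-$i$ component does not disconnect it, which uses precisely that $v$ and $v'$ have identical colour-$i$ neighbourhoods: every former neighbour of $v'$ in this component is still adjacent to $v$ via a colour-$i$ edge in $K_{n,m}$. Apart from this, the argument is routine, and the singleton-tree case in the upper-bound direction is the only other minor wrinkle to dispatch.
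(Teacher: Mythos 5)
Your proposal is correct and follows essentially the same route as the paper: duplicate a vertex together with the colours of its incident edges, observe that every monochromatic component is unchanged except for possibly absorbing the clone, and iterate. The paper states this in three lines; your write-up just makes explicit the two inequalities and the (easy) singleton-tree case.
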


\begin{proof}
 Duplicate any vertex $x$, together with its incident edges and their colours, to obtain an $(r,k)$-colouring of $K_{n+1,m}$ (or of $K_{n,m+1}$).
 Since all monochromatic components have stayed the same, modulo a possible duplication of $x$, the new colouring of  $K_{n+1,m}$ (or of $K_{n,m+1}$) cannot be covered with fewer than $\tc(K_{n,m},\varphi)$ monochromatic trees. 
 By applying induction, we are done.
\end{proof}

In the same way, we obtain the analogous statement for the complete graph (the edge between the two copies of $x$ can receive any set of colours).

\begin{obs}
 \label{obsred5}
 Let  $\varphi$ be an $(r,k)$-colouring of  $K_{n}$. 
 Then for each $n'\geq n$ there is an $(r,k)$-colouring $\varphi'$ of $K_{n'}$ such that  $\tc(K_{n'},\varphi') = \tc(K_n,\varphi)$.
\end{obs}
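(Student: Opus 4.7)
The plan is to follow the same duplication strategy as in the proof of Observation~\ref{obsred4}, with a single new wrinkle: the edge between $x$ and its copy $x'$ did not exist in the bipartite case, and so must be handled explicitly. By induction on $n'-n$, it suffices to handle the step from $K_n$ to $K_{n+1}$. Pick any vertex $x\in V(K_n)$, introduce a new vertex $x'$, and define $\varphi'$ on $K_{n+1}$ by setting $\varphi'(x'y) = \varphi(xy)$ for every $y\neq x,x'$, while assigning to $xx'$ an arbitrary element of $\binom{[r]}{k}$ (the hint in the statement that this edge can receive any colour set).

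For the inequality $\tc(K_{n+1},\varphi')\leq \tc(K_n,\varphi)$, I would take any optimal cover of $K_n$ by monochromatic trees, locate the tree $T$ of colour $c$ that contains $x$, and extend it to also cover $x'$. If $T$ is a single vertex we can simply add the edge $xx'$ and use any colour in $\varphi'(xx')$; if $T$ has an edge $xy$ of colour $c$, then $\varphi'(x'y)=\varphi(xy)\ni c$, so adding $x'y$ to $T$ yields a monochromatic tree of the same colour now covering $x'$ as well.

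For the reverse inequality $\tc(K_n,\varphi)\leq \tc(K_{n+1},\varphi')$, I would convert each monochromatic tree $T$ of colour $c$ in a cover of $K_{n+1}$ into a monochromatic tree $T'$ of colour $c$ in $K_n$ covering $(V(T)\setminus\{x'\})\cup\{x\}$. If $x'\notin V(T)$ there is nothing to do, and if $x'\in V(T)$ but $x\notin V(T)$ we simply relabel $x'$ as $x$, using that corresponding edges have identical colour sets. The one case that requires actual argument is when $T$ contains both $x$ and $x'$: here I would delete the vertex $x'$ and, for each neighbour $y\neq x$ of $x'$ in $T$, insert the edge $xy$ (which lies in colour $c$ since $\varphi'(x'y)=\varphi(xy)$). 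A short connectivity check shows that the components of $T-x'$ are exactly the subtrees hanging off the neighbours of $x'$, and the added edges $xy$ glue them all to the component containing $x$, yielding a single monochromatic tree on $V(T)\setminus\{x'\}$.

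The main obstacle, and the only non-routine piece of the argument, is precisely this last case: one has to check that replacing $x'$ by $x$ inside a tree that already contained $x$ does not create a cycle or a disconnection. This is a straightforward check on forests, and once it is in hand the two inequalities combine to give $\tc(K_{n+1},\varphi')=\tc(K_n,\varphi)$, and induction on $n'-n$ completes the proof.
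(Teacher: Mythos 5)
Your proof is correct and follows essentially the same approach as the paper, which simply duplicates a vertex as in the proof of Lemma~\ref{obsred4}, notes that the edge between the two copies of $x$ may receive any colour set, and observes that all monochromatic components are unchanged up to this duplication. One tiny slip in your final case: if $T$ contains both $x$ and $x'$ and $x$ lies at distance at least $2$ from some neighbour $y$ of $x'$ in $T$, then inserting $xy$ \emph{does} create a cycle; this is harmless, since the resulting graph is still a connected monochromatic subgraph on $V(T)\setminus\{x'\}$ and any spanning tree of it serves as the required cover element.
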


It is well known that Ryser's conjecture, if true, is tight for infinitely many values of $r$. 
Namely\footnote{The construction is as follows. 
 Consider the complete graph $K_n$ on the point set of an affine plane of order $r-1$ (with $r-1$ a prime power). 
 Colour $uv$ with colour $i$ if  the $i$th partition is the unique partition $P_{u,v}$ which has a block covering both $u$ and $v$. 
 As each monochromatic component of $K_n$ corresponds to a block of the affine plane, and thus has $r-1=n/(r-1)$ vertices, we need at least $r-1$ monochromatic trees to cover  $V(K_n).$}, 
if $r-1$ is a prime power, then  $K_{(r-1)^2}$ has an $r$-colouring $\varphi$ with $\tc(K_{(r-1)^2},\varphi) \geq r - 1$. 
So, using Lemma~\ref{obsred5} and Lemma \ref{obsred3} we get:

\begin{obs}\label{loboco}
 Let  $r\geq k$ with $r-1$ a prime power, and let $n \geq (r-1)^2$. 
 Then there is an $(r,k)$-colouring $\varphi$ of $K_n$ with $\tc(K_n,\varphi) \geq \lfloor r/k \rfloor - 1$.
\end{obs}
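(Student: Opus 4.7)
The plan is a three-step reduction that chains the affine-plane Ryser lower bound from the preceding footnote with Lemmas~\ref{obsred5} and~\ref{obsred3}.

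First, set $s := \lfloor r/k \rfloor$ and apply the footnote's construction at order $s - 1$ (the prime-power input). This yields an $s$-colouring $\varphi_0$ of $K_{(s-1)^2}$ whose monochromatic components are precisely the blocks of an affine plane of order $s - 1$. Each block has $s - 1$ points, so any cover of the $(s-1)^2$ vertices by monochromatic trees has size at least $(s-1)^2 / (s-1) = s - 1$; hence $\tc(K_{(s-1)^2}, \varphi_0) \geq s - 1$.

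Next, I invoke Lemma~\ref{obsred5} to inflate $\varphi_0$ to an $s$-colouring $\varphi_1$ of $K_n$ for every $n \geq (s-1)^2$, preserving $\tc(K_n, \varphi_1) \geq s - 1$. The hypothesis $n \geq (r-1)^2$ is more than enough since $(r-1)^2 \geq (s-1)^2$.

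Finally, I apply Lemma~\ref{obsred3}: its proof takes any $s$-colouring and replaces each of the $s$ colours by a fresh disjoint $k$-block, producing an $(sk,k)$-colouring with identical monochromatic components and hence identical $\tc$. Because $sk \leq r$, this colouring embeds into an $(r,k)$-colouring $\varphi$ of $K_n$ by leaving the remaining $r - sk$ colours unused. Then $\tc(K_n, \varphi) \geq s - 1 = \lfloor r/k \rfloor - 1$, as required.

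No step is combinatorially hard: the vertex inflation is exactly Lemma~\ref{obsred5} and the colour splitting is exactly the proof of Lemma~\ref{obsred3}. The only point requiring care is applying the affine-plane construction at the correct order $s - 1$ so that Lemma~\ref{obsred3} yields the bound $\lfloor r/k \rfloor - 1$ (rather than something weaker). The prime-power hypothesis in the observation is precisely what guarantees the affine plane is available at this order.
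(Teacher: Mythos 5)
Your chain of reductions is exactly the one the paper intends: the affine-plane construction from the footnote, inflated by Lemma~\ref{obsred5}, then converted to a set-colouring by the colour-splitting of Lemma~\ref{obsred3}. You are also right that the splitting step forces you to start the construction at order $s-1$ with $s=\lfloor r/k\rfloor$, since an $s$-colouring becomes an $(sk,k)$-colouring and $sk\leq r$ is needed. Steps two and three are fine.

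The gap is in your final sentence: the hypothesis of the observation is that $r-1$ is a prime power, and this does \emph{not} guarantee that $s-1=\lfloor r/k\rfloor-1$ is a prime power, which is what your first step actually requires. For example, with $r=14$ and $k=2$ we have $r-1=13$ a prime, but $s-1=6$, and no affine plane of order $6$ exists; your argument then only delivers the bound coming from the largest prime power below $6$, i.e.\ $\tc\geq 5$ rather than the claimed $6$. So as written the proof does not establish the statement for all $r\geq k$ with $r-1$ a prime power. To be fair, this mismatch is already latent in the paper: its one-line derivation via Lemmas~\ref{obsred5} and~\ref{obsred3} likewise needs the prime-power condition at $\lfloor r/k\rfloor-1$, and the statement is clean only under that (or in the case $k=1$, where $s-1=r-1$). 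You should either add the hypothesis that $\lfloor r/k\rfloor-1$ is a prime power, or weaken the conclusion to $\tc(K_n,\varphi)\geq q$ where $q$ is the largest prime power with $q\leq \lfloor r/k\rfloor-1$; what you cannot do is cite the stated hypothesis as supplying the plane of order $\lfloor r/k\rfloor-1$.
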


We close this section with another consequence of Lemma~\ref{obsred5}.

\begin{obs}
 \label{frr-2}
 For every $r \geq 3$ and $n \geq r$, we have that $\tc_{r,r-2}(K_n) \geq 2.$
\end{obs}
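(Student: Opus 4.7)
The plan is to exhibit, on $n=r$, an explicit $(r,r-2)$-colouring of $K_r$ such that no single colour spans the vertex set, and then extend to arbitrary $n \geq r$ via Lemma~\ref{obsred5}.

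Concretely, I would identify $V(K_r)$ with $[r]$ and define $\varphi$ by letting
\[
 \varphi(ij) = [r]\setminus\{i,j\}
\]
for each edge $ij$ of $K_r$. Since $r\geq 3$, the set $[r]\setminus\{i,j\}$ has size exactly $r-2\geq 1$, so $\varphi$ is indeed an $(r,r-2)$-colouring.

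Next I would check, for each colour $c\in[r]$, which edges carry colour~$c$. By definition, $c\in\varphi(ij)$ iff $c\notin\{i,j\}$, so the monochromatic subgraph in colour~$c$ consists exactly of the edges of $K_r$ with both endpoints in $[r]\setminus\{c\}$. In particular, vertex $c$ is isolated in colour~$c$, and hence no monochromatic tree of colour~$c$ can cover $V(K_r)$. Since this holds for every colour, we conclude $\tc(K_r,\varphi)\geq 2$.

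Finally, Lemma~\ref{obsred5} yields, for each $n\geq r$, an $(r,r-2)$-colouring $\varphi'$ of $K_n$ with $\tc(K_n,\varphi') = \tc(K_r,\varphi) \geq 2$, proving $\tc_{r,r-2}(K_n)\geq 2$. There is no real obstacle here beyond picking the right colouring; the one above is essentially forced by the fact that each edge must omit exactly two colours, and declaring each edge to omit the colours named by its endpoints makes every colour class a $K_{r-1}$ missing one vertex.
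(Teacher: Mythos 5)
Your proof is correct and uses exactly the same construction as the paper: colour each edge $v_iv_j$ with $[r]\setminus\{i,j\}$ so that vertex $c$ is isolated in colour $c$, and then extend to all $n\geq r$ via Lemma~\ref{obsred5}. Your write-up just makes explicit the verification that the paper summarizes as ``no colour is connected.''
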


\begin{proof}
 Define an $(r,r-2)$-colouring of $K_r$ (on vertices $v_1,\dots, v_r$) by  assigning $v_iv_j$ colours $[r]\setminus \{i,j\}$. 
 Then no colour is connected. By Lemma~\ref{obsred5}, we are done.
\end{proof}

\section{Upper bounds for complete bipartite graphs}\label{sec:upperbip}

In this section we prove Theorem \ref{thm:mainBip}. We split the proof into two parts, covered by the following two lemmas.

\begin{lem}
 \label{lem:mt1}
 For all $n,m$, we have that
 $\tc_{r,k}(K_{n,m}) \leq
 r-k+1$, if  $k \geq r/2$, and
 $\tc_{r,k}(K_{n,m}) \leq 2r-3k+2$ otherwise.
\end{lem}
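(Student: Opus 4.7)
The plan is to fix an edge $e = vw$ with $v\in A$, $w\in B$, write $C := \varphi(e)$ (so $|C|=k$), and for each colour $c \in C$ consider the \emph{double-star} $D_c := \{v,w\} \cup \{x \in B : c \in \varphi(vx)\} \cup \{u \in A : c \in \varphi(uw)\}$; this induces a monochromatic tree in colour $c$, because the $c$-stars at $v$ and at $w$ are joined by the $c$-edge $vw$. Setting
\[
A^* := \{u \in A : \varphi(uw) \cap C = \emptyset\}, \qquad B^* := \{x \in B : \varphi(vx) \cap C = \emptyset\},
\]
the $k$ trees $\{D_c : c \in C\}$ cover $V(K_{n,m}) \setminus (A^* \cup B^*)$.

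For $k < r/2$, the edges from $w$ to $A^*$ have colour sets contained in $[r] \setminus C$, so they form an $(r-k,k)$-coloured star at $w$; by Observation~\ref{star} this star can be covered by $(r-k)-k+1 = r-2k+1$ monochromatic stars at $w$. A symmetric argument covers $B^*$ with $r-2k+1$ stars at $v$. Together with the $k$ double stars this yields $k + 2(r-2k+1) = 2r-3k+2$ monochromatic trees, as required.

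For $k \geq r/2$ I split into two sub-cases. If $k > r/2$, then $r-k+1 \leq k$; pick any $\mathcal{C}\subseteq C$ with $|\mathcal{C}| = r-k+1$. Since $|\varphi(vx)| + |\mathcal{C}| = k + (r-k+1) > r$, pigeonhole gives $\varphi(vx) \cap \mathcal{C} \neq \emptyset$ for every $x \in B$ and symmetrically for edges $uw$, so the $r-k+1$ trees $\{D_c : c \in \mathcal{C}\}$ already cover everything. If $k = r/2$, the $k$ double stars cover $V(K_{n,m}) \setminus (A^* \cup B^*)$ and I need exactly one additional tree; a size count forces $\varphi(uw) = [r]\setminus C$ for every $u \in A^*$ and $\varphi(vx) = [r]\setminus C$ for every $x \in B^*$.

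The main obstacle is this last step. If one of $A^*, B^*$ is empty, a single monochromatic star at $v$ or $w$ in any colour $c_0 \in [r]\setminus C$ covers the non-empty one. Otherwise I dichotomise. Either there exist $u_0\in A^*,\, x_0\in B^*,\, c_0\in[r]\setminus C$ with $c_0 \in \varphi(u_0 x_0)$: then all three edges $vx_0,\, x_0 u_0,\, u_0 w$ carry colour $c_0$, so the $c_0$-stars at $v$ and at $w$ (which respectively contain $B^*$ and $A^*$) merge via the path $v x_0 u_0 w$ into a single connected $c_0$-subgraph whose spanning tree covers $A^* \cup B^*$. Or no such triple exists, forcing $\varphi(u_0 x_0) \cap ([r]\setminus C) = \emptyset$, and hence $\varphi(u_0 x_0) = C$ by size, for all $u_0 \in A^*,\, x_0 \in B^*$; then for any $c \in C$ the complete bipartite graph on $A^*\cup B^*$ is entirely $c$-coloured, and its spanning tree is a single monochromatic $c$-tree. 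Either way, one extra tree suffices, giving the bound $k + 1 = r - k + 1$.
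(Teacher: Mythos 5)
Your proof is correct and follows essentially the same route as the paper's: fix an edge $vw$, use the $k$ double stars in its colours (which is the paper's ``$r-k+1$ stars at each endpoint, $\min\{k,r-k+1\}$ pairs joined by $vw$'' count, just organised via the leftover sets $A^*,B^*$), and handle the boundary case $k=r/2$ by observing that all edges from $v,w$ into the uncovered set carry exactly the colours $[r]\setminus\varphi(vw)$ and then splitting on whether some edge inside the uncovered set uses one of those colours. The case analysis and arithmetic all check out.
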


\begin{lem}
 \label{lem:mt_2k+p}
 If $r/2 > k \geq 2r/5$, then
 $\tc_{r,k}(K_{n,m}) \leq
 2r-3k+1$ for all $n,m$.
\end{lem}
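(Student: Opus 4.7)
The plan is to refine the two-vertex star strategy from the proof of Lemma~\ref{lem:mt1}, which yields $2r-3k+2$ trees, by exhibiting an auxiliary length-$3$ monochromatic path that fuses two otherwise-disjoint stars and saves one tree.

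Pick any edge $xy$ of $K_{n,m}$ and set $C := \varphi(xy)$, $D := [r] \setminus C$, $N_C := \{y' : \varphi(xy') \cap C = \emptyset\}$, and $M_C := \{x' : \varphi(x'y) \cap C = \emptyset\}$. If $M_C = \emptyset$ or $N_C = \emptyset$ --- say $N_C = \emptyset$ --- then the $k$ stars $S_c(x)$, $c \in C$, already cover the side of $y$. Combining with a $(r-k+1)$-subset of stars at $y$ (which suffices by Lemma~\ref{star}) and merging the $k$ $C$-coloured pairs along $xy$ yields only $r-k+1 \le 2r-3k+1$ trees, so this case is handled.

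Now suppose $M_C, N_C \neq \emptyset$. The goal is to locate $x' \in M_C$, $y' \in N_C$, and a colour $i_0 \in \varphi(xy') \cap \varphi(x'y) \cap \varphi(x'y')$; any such $i_0$ automatically lies in $D$, so the path $x$--$y'$--$x'$--$y$ is monochromatic in colour $i_0$, and the subgraph
\[ T_{i_0} := S_{i_0}(x) \cup S_{i_0}(y) \cup \{xy',\, x'y',\, x'y\} \]
is a single monochromatic $i_0$-tree containing both $x$ and $y$. Choose $A, B \supseteq C \cup \{i_0\}$ of size $r-k+1$. Then the stars at $x$ in $A \setminus \{i_0\}$, the stars at $y$ in $B \setminus \{i_0\}$, together with $T_{i_0}$, cover $K_{n,m}$: a vertex $y''$ missed by the $(A \setminus \{i_0\})$-stars at $x$ must satisfy $\varphi(xy'') = ([r] \setminus A) \cup \{i_0\}$, hence $y'' \in S_{i_0}(x) \subseteq T_{i_0}$, and symmetrically on the left. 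With $k$ merges of $C$-coloured pairs along $xy$, the total tree count is $1 + 2(r-k) - k = 2r-3k+1$, as required.

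The main obstacle is to guarantee the triple $(x', y', i_0)$ always exists in the nontrivial case. The hypothesis $k \geq 2r/5$ enters here: because $\varphi(xy'), \varphi(x'y) \subseteq D$ with $|D| = r-k \leq 3k/2$, we have $|\varphi(xy') \cap \varphi(x'y)| \geq 2k-(r-k) = 3k-r \geq 1$ for every pair, so the double-intersection $J(x',y')$ is never empty. The remaining difficulty is to rule out the rigid configuration in which $\varphi(x'y')$ systematically avoids $J(x',y')$ for every $(x',y') \in M_C \times N_C$. I expect this to fall either to an averaging argument across $M_C \times N_C$ that uses the size bound on $J(x',y')$ afforded by $k \geq 2r/5$, or to re-selecting the base edge (for instance, to some $x'y'$ inside the bad set) and re-entering the Case~1 dichotomy above, which cannot loop indefinitely.
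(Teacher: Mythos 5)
Your construction in the ``good'' case is sound: the count $2(r-k)+1-k=2r-3k+1$ is right, the coverage argument via $\varphi(xy'')=([r]\setminus A)\cup\{i_0\}$ is correct, and the degenerate case $N_C=\emptyset$ (or $M_C=\emptyset$) is handled properly. But the proof has a genuine gap exactly where you flag it: you never establish that a triple $(x',y',i_0)$ with $i_0\in\varphi(xy')\cap\varphi(x'y)\cap\varphi(x'y')$ exists, and this is where essentially all of the difficulty of the lemma lives. The rigid configuration can occur: already for $r=5$, $k=2$, take $C=\{1,2\}$, a single $x'\in M_C$ and a single $y'\in N_C$ with $\varphi(xy')=\{3,4\}$, $\varphi(x'y)=\{3,5\}$ and $\varphi(x'y')=\{4,5\}$; then $J(x',y')=\{3\}$ is avoided and no triple exists. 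With only one pair in $M_C\times N_C$ an averaging argument has nothing to average over, and re-basing at $x'y'$ gives no measure of progress, so neither of your two proposed remedies is obviously viable. Note also that you only use the hypothesis $k\geq 2r/5$ to get $3k-r\geq 1$, which already follows from $k>r/3$; since the lemma is stated precisely at the threshold $2r/5$, this is a strong hint that the hypothesis must be used more substantially in the missing step.

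The paper's route through this obstruction is quite different and worth comparing. It first proves two auxiliary lemmas (Lemmas~\ref{edge} and~\ref{tuplas}) showing that if some vertex $v'$ has \emph{no} incident edge whose colour set is exactly a prescribed $k$-set, then one can already cover with $2r-3k+1$ trees by taking only $r-k$ stars at one endpoint of a suitable edge. This lets it assume that every uncovered vertex $v'$ has an edge coloured exactly $\varphi(vw)$, which forces the existence of an edge $v'w'$ carrying $2(r-2k)$ colours outside $\varphi(vw)$ (here $2(r-2k)\leq k$ is exactly the condition $k\geq 2r/5$). It then grows a second batch of $2(r-2k)$ components through $v'w'$ and runs a counting argument over the tripartition $\varphi(vw)\cup\mathcal C_{v'w'}\cup\mathcal C$ of $[r]$ to force every colour of $\mathcal C$ onto the edges $v''w$, $v'w$, $vw''$, $vw'$, after which a short case analysis on the colours appearing between the two leftover sets finishes the proof. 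If you want to salvage your approach, you would need an argument of comparable strength for the case where $\varphi(x'y')$ avoids $J(x',y')$ for every pair; as it stands, the proposal proves the bound only under an unverified structural assumption.
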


We first prove the easier Lemma \ref{lem:mt1}.

\begin{proof}[Proof of Lemma \ref{lem:mt1}]
 Let $\varphi$ be an $(r,k)$-colouring of $K_{n,m}$ and fix an edge $vw \in E(K_{n,m})$. 
 By Lemma~\ref{star}, we can cover $K_{n,m}$ using $r-k+1$ stars centered at $v$ and  $r-k+1$ stars centered at $w$. 
 Since we can choose the same $r-k+1$ colours for both sets of stars, the edge $vw$, which has $k$ colours, connects $\min\{k, r-k+1\}$ of the stars.

 This bounds $\tc_{r,k}(K_{n,m})$ by $2r-3k+2$ in the case that $k\leq r-k+1$, and if $r < 2k-1$, we get a bound of $r-k+1$. 
 
 For the case $r=2k$, let $U$ be the set of vertices not covered by the $k$ components in colours $\varphi(vw)$ containing the edge $vw$. 
 Since $\varphi(ux) = [r]\setminus \varphi(vw)$ for every $u \in U$ and $x \in \{u,v\}$, we can cover the vertices of $U$ with at most two stars $S_v, S_w$ centered at $v$ and $w$, respectively, by using any colour in $[r]\setminus \varphi(vw)$. 
 If there is an edge in the complete bipartite graph induced by $U$ with a colour $c \in [r]\setminus \varphi(vw)$, then $S_v$ and $S_w$ are connected and we can cover all the vertices with $k+1 = r-k+1$ monochromatic trees. 
 If not, then every edge induced by $U$ is coloured by $\varphi(vw)$ so we can choose any of these colours to cover $U$ with just one monochromatic component, obtaining $k+1 = r-k+1$ monochromatic components covering the vertices of $K_{n,m}$ as well.
\end{proof}

We now turn to the less straightforward proof of Lemma \ref{lem:mt_2k+p}.
We need two preliminary lemmas.

\begin{lem}
 \label{edge}
 Suppose $ k<r/2 $ and let an $(r,k)$-colouring  of $K_{n,m}$ be given. 
 If there is a vertex $v$ and a set~$\mathcal C$ of $k$ colours such that no edge incident with $v$ has exactly the colours of $\C$, then there is a set~$\mathcal C'$ of $k$ colours such that
 \begin{enumerate}[(a)]
  \item no edge incident with $v$ has exactly the colours of $\C'$, and
  \item there is an edge incident with $v$ that has no colour of $\C'$.
 \end{enumerate}
\end{lem}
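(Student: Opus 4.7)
My plan is to argue by contradiction, phrasing the statement in terms of the Kneser graph $KG(r,k)$, whose vertex set is $\binom{[r]}{k}$ and whose edges are the pairs of disjoint $k$-subsets. Let $\mathcal{F} := \{\varphi(e) : e \text{ is an edge incident with } v\} \subseteq \binom{[r]}{k}$. The hypothesis is exactly $\mathcal{C} \notin \mathcal{F}$, and I need to produce a $k$-set $\mathcal{C}'$ with $\mathcal{C}' \notin \mathcal{F}$ (condition (a)) and some $S \in \mathcal{F}$ disjoint from $\mathcal{C}'$ (condition (b)).

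Assume for contradiction that no such $\mathcal{C}'$ exists. Then every $k$-subset $T$ that is disjoint from at least one member of $\mathcal{F}$ must itself lie in $\mathcal{F}$; equivalently, whenever $S \in \mathcal{F}$, every $k$-subset of $[r]\setminus S$ is in $\mathcal{F}$. This is precisely the statement that $\mathcal{F}$ is closed under taking neighbours in $KG(r,k)$.

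Since $v$ has at least one incident edge in $K_{n,m}$, the family $\mathcal{F}$ is non-empty. Since $k<r/2$, i.e., $r\geq 2k+1$, the Kneser graph $KG(r,k)$ is connected (a classical fact). A non-empty vertex subset of a connected graph that is closed under taking neighbours must be the whole vertex set, so $\mathcal{F}=\binom{[r]}{k}$, contradicting $\mathcal{C}\notin\mathcal{F}$. The only nontrivial ingredient is the connectivity of $KG(r,k)$ for $r\geq 2k+1$, which is standard, so once the Kneser-graph reformulation is in place I do not anticipate any real technical obstacle.
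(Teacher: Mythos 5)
Your proof is correct, and it takes a genuinely different route from the paper's. You reformulate the hypothesis as ``the family $\mathcal{F}$ of colour-sets at $v$ is a proper, non-empty subset of $\binom{[r]}{k}$'' and observe that the failure of the conclusion forces $\mathcal{F}$ to be closed under passing to disjoint $k$-sets, i.e.\ closed under neighbourhoods in the Kneser graph $KG(r,k)$; connectivity of $KG(r,k)$ for $r\geq 2k+1$ then forces $\mathcal{F}=\binom{[r]}{k}$, a contradiction. The paper instead runs an explicit induction showing that no edge at $v$ carries exactly $i$ colours of $\mathcal{C}$ for each $i=0,1,\dots,k$, choosing at each step concrete disjoint $k$-sets with prescribed intersections --- in effect it walks through the Kneser graph by hand rather than invoking its connectivity. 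Your version is shorter, makes the role of the hypothesis $k<r/2$ transparent (it is exactly the connectivity threshold $r\geq 2k+1$), and isolates the one external fact being used; the paper's version is self-contained at the cost of some bookkeeping. Two minor points you should make explicit: (i) $\mathcal{F}\neq\emptyset$ requires both sides of the bipartition to be non-empty, an assumption the paper's proof also makes tacitly when it declares ``$E_v$ has no edges'' a contradiction; and (ii) since the connectivity of $KG(r,k)$ is the entire load-bearing ingredient, it would be worth including its one-line proof (any two $k$-sets sharing $k-1$ elements have a common Kneser-neighbour because $r-(k+1)\geq k$, and the graph on $k$-sets with adjacency ``intersection of size $k-1$'' is connected) rather than only citing it as classical.
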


\begin{proof}
 Let  $v$ be as in the lemma. 
 Let $E_v$ be the set of edges incident with $v$. 
 Assume there is no set $\C'$ as required for the lemma. 
 Then, we use induction to prove that for all $i=0,1,\ldots k$ it holds that no edge in $E_v$ has exactly $i$ colours from $\C$.

 Note that the base case $i=0$ of our induction follows from the assumption that $\C$ is not the desired set $\C'$. 
 So assume the assertion holds for $i-1$, our aim is to show that it also holds for $i$. 
 If the assertion does not hold for $i$, then there is an edge $e_i$ that has exactly $i$ colours from $\C$. 
 Let $\C_i$ be the set of all colours not on $e_i$. 
 Let $\C_i'$ be a $k$-subset of $\C_i$ that has exactly $i$ elements from $[r]\setminus\C$ (such a subset exists, since $r\geq2k$ and $i\leq k$). 
 Since we assume that $\C_i'$ is not the desired set $\C'$, it follows that  there is an edge $e_i'$ in $E_v$ that has exactly the colours in $\C_i'$.

 Let $\C_i''$ be a $k$-set of  colours not on $e_i'$ such that $\C_i''$ has exactly $i-1$ elements from $\C$ (such a subset exists, since $r>2k$ and $i\leq k$). 
 Since we assume that $\C_i''$ is not the desired set $\C'$, it follows that  there is an edge  in $E_v$ that has exactly the colours in $\C_i''$. 
 But such an edge cannot exist, since we assume the inductive assertion to hold for $i-1$. 
 This finishes the inductive proof.

 Now, observe that since $\C$ is not as desired, no edge in $E_v$ has colours that form a subset of $[r]\setminus\C$. 
 Moreover, as we showed above, no edge in $E_v$ has $k$ or fewer colours from $\C$. 
 This implies that $E_v$ has no edges at all, a contradiction.
\end{proof}

\begin{lem}\label{tuplas}
 Suppose $k < r/2 $  and let an $(r,k)$-colouring $\varphi$ of $K_{n,m}$ be given. 
 If there is a vertex $v$ and a set~$\mathcal C$ of $k$ colours such that no edge incident with $v$ has exactly the colours of $\mathcal C$, then $\tc(K_{n,m},\varphi) \leq 2r-3k+1$.
\end{lem}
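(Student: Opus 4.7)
The plan is to first apply Lemma~\ref{edge} to obtain a $k$-set $\mathcal{C}'$ of colours satisfying conclusions (a) and (b). Then I would fix a neighbour $w$ of $v$ with $\varphi(vw) \cap \mathcal{C}' = \emptyset$, as guaranteed by (b), and set $D := \varphi(vw)$, a $k$-subset of $[r] \setminus \mathcal{C}'$.

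The covering strategy combines stars centred at $v$ with stars centred at $w$, but with an \emph{asymmetric} choice of colours: at $v$ I use only $r-k$ colours, saving one star compared with the symmetric argument in Lemma~\ref{lem:mt1}. Let $A := [r] \setminus \mathcal{C}'$, of size $r-k$, and let $B$ be any $(r-k+1)$-subset of $[r]$ containing $D$; this exists since $|D| = k < r-k+1$ (using $r > 2k$). For every neighbour $u$ of $v$, condition (a) gives $\varphi(uv) \neq \mathcal{C}' = [r] \setminus A$, and since both sets have size $k$ this forces $\varphi(uv) \cap A \neq \emptyset$; hence $u$ lies in some star at $v$ in a colour of $A$. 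On the opposite side, Lemma~\ref{star} applied at $w$ with the $(r-k+1)$-set $B$ yields a cover of the side opposite $w$ by $r-k+1$ stars at $w$.

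Every colour $c \in D$ lies in both $A$ and $B$, and the edge $vw$ carries $c$, so the star at $v$ in colour $c$, the star at $w$ in colour $c$, and the edge $vw$ together span a single monochromatic tree in colour $c$. This yields $|D| = k$ mergers, so the total number of monochromatic trees is at most $(r-k) + (r-k+1) - k = 2r-3k+1$, as required.

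The main obstacle I anticipate is spotting that condition (a) is precisely the lever needed to drop the star count at $v$ from the generic $r-k+1$ down to $r-k$: the set $[r]\setminus\mathcal{C}'$ has size one below the Lemma~\ref{star} threshold, but (a) rules out the unique $k$-set that could be the colour-set of an edge at $v$ disjoint from $A$. Once this one-colour saving at $v$ is identified, condition (b) serves only to locate a $w$ such that the $k$ standard mergers along $vw$ all go through, yielding exactly the claimed improvement of one over Lemma~\ref{lem:mt1}.
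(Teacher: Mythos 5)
Your proof is correct and follows essentially the same route as the paper's: apply Lemma~\ref{edge}, use property (a) to cover the side opposite $v$ with only the $r-k$ stars at $v$ in colours $[r]\setminus\mathcal{C}'$, cover the side opposite $w$ with $r-k+1$ stars at $w$, and use the edge $vw$ (which by (b) carries $k$ colours outside $\mathcal{C}'$) to merge $k$ pairs of stars, giving $2r-3k+1$ trees. The only cosmetic difference is that the paper fixes the colour set at $w$ to be $[r]\setminus\mathcal{C}'$ plus one colour of $\mathcal{C}'$, whereas you allow any $(r-k+1)$-set containing $\varphi(vw)$; both choices work identically.
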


\begin{proof}
 Apply Lemma~\ref{edge}, for simplicity, let us call the obtained set $\C'$ still $\C$. 
 Let $vw$ be the edge given by Lemma~\ref{edge}~(b). 
 We now  proceed similarly to the proof of Lemma \ref{lem:mt1}, the only difference being that now we only take $r-k$ stars at vertex $v$ (instead of taking $r-k+1$ as in the proof of Lemma \ref{lem:mt1}). 
 The colours we choose for the stars at $v$ are exactly the colours not in~$\C$. 
 For the stars at $w$, we choose the same colours, plus one more colour, arbitrarily chosen from $\C$. 
 Note that since $vw$ has no colours from $\C$, it can be used to connect $k$ pairs of stars. 
 Hence we obtain a cover with $2r-3k+1$ monochromatic trees, as desired.
\end{proof}

We are now ready to prove  Lemma \ref{lem:mt_2k+p}.

\begin{proof}[Proof of Lemma \ref{lem:mt_2k+p}]
 Let $A,B$ be the bipartition classes of $K_{n,m}$, and fix an edge $vw \in E(K_{n,m})$ with $v\in A$ and $w\in B$. 
 Let $C_0$ be the set of vertices covered by the union of the $k$ monochromatic components that contain the edge $vw.$

 If $A':=A \setminus C_0$ is empty, then consider the star with center $v$ and leaves $B\setminus C_0$, with its inherited $(r-k,k)$-colouring. 
 By Lemma~\ref{star}, this star can be covered with at most $r-2k+1$ monochromatic stars. 
 Thus, we can cover all of $K_{n,m}$ using $k + (r-2k+1)=r-k+1 < 2r-3k+ 1$ monochromatic components in total. 
 So assume $A' \neq \emptyset$, and by symmetry, also $B':=B \setminus C_0 \neq \emptyset$.

 We claim that there is an edge $v'w'$ with $v'\in A',w'\in B'$ such that
 \begin{equation}\label{a'b'}
  \text{$|\varphi(v'w')\setminus \varphi(vw)|\geq 2(r-2k)$.}
 \end{equation}
 For the proof of~\eqref{a'b'}, start by choosing any vertex $v'\in A'$. 
 Observe that by Lemma~\ref{tuplas}, $v'$ is incident with an edge $v'x$ that has exactly colours $\varphi(vw)$. 
 Since $v'\notin C_0$, we know that $x\notin C_0$, and thus $x\in B'$. 
 Take a subset of $2(r-2k)$ colours of $\varphi(vw)$ (note that $2(r-2k)\leq k$ since $r\leq 5k/2$), and consider the corresponding monochromatic components that contain $v'x$. 
 If these components cover all of $A'\cup B'$, then we have found the desired cover of size $k+2(r-2k)< 2r-3k+1$. So assuming the contrary, there is a vertex $w'\in B'$ not covered by these components. 
 Then $v'w'$ avoids the $2(r-2k)$ colours of $\varphi(vw)$ we chose above. 
 Hence, $v'w'$ has $2(r-2k)$ colours that are not from $\varphi(vw)$, which is as desired for~\eqref{a'b'}.

 \smallskip

 So, let $v'w'$ be as in~\eqref{a'b'}, choose a set $\mathcal C_{v'w'}$ of  $2(r-2k)$ colours from $\varphi(v'w')$, and let $C_1$ be the set of vertices covered by the union of the $2(r-2k)$ monochromatic components in these colours that contain the edge $v'w'$. 
 Let $\bar C = C_0 \cup C_1.$  
 Since $k + 2(r-2k) = 2r-3k,$ we can assume that $(A \cup B) \setminus \bar C$ is non empty.

 By symmetry, assume $A'':=A  \setminus \bar C\neq\emptyset$, and let $v'' \in A''$. 
 Since $v''\notin C_1$, each colour from $\mathcal C_{v'w'}$ can appear on at most one of the edges $v''w$, $v'w$. 
 Moreover, since $v',v''\notin C_0$, no colour from $\varphi(vw)$ appears on the edges $v''w$, $v'w$. 
 So, as each of the edges $v''w$, $v'w$ has $k$ colours, there are at least $2k-2(r-2k)=6k-2r$ appearances of some colour of $\mathcal C:=[r]\setminus(\varphi(vw)\cup \mathcal C_{v'w'})$ on the edges $v''w$, $v'w$. 
 As $|\mathcal C|=3k-r$, all colours of $\mathcal C$ have to appear on both edges $v''w$, $v'w$.

 In particular, we obtain that all of $A''$ can be covered with a single star. 
 Hence we may from now on assume that also $B'':=B \setminus \bar C\neq\emptyset$ (as otherwise we are done). 
 Note that by a symmetric argument to the one given above, also for each $w'' \in B''$ all colours of $\mathcal C$ appear on both edges $vw''$, $vw'$.

 Noting that $v''$ was chosen arbitrarily in $A ''$, we can resume our observations as follows. 
 For each $v'' \in A''$, and each $w'' \in B''$,
 \begin{equation}\label{a''b''}
  \text{all colours of $\mathcal C$ appear on each of the edges $v''w$, $v'w$, $vw''$, $vw'$.}
 \end{equation}

 If there is an edge between  $A''$ and $B''$ that has one of the colours from $\mathcal C$, then, by~\eqref{a''b''}, we can cover all of $K_{n,m}$ with $k+2(r-2k)+1=2r-3k+1$ monochromatic components, and are done. So we may assume that
 \begin{equation}\label{no5}
  \text{no colour of $\mathcal C$ appears on an edge between $A''$ and $B''$.}
 \end{equation}

 Similarly, if there is an edge $e$ between $A''$ and $w'$ that has some colour $i\in\mathcal C$, we can find the desired cover (as then $e$, together with the edge $vw'$, connects the two stars in colour $i$ that cover $\{v\}\cup B''$ and $\{w\}\cup A''$). 
 We can repeat this argument for edges between $v'$ and $B''$. 
 Therefore, and as by definition $A''$ and $ B''$ avoid $C_1$, we may assume that
 \begin{equation}\label{all12}
  \text{all edges from $w'$ to  $A''$ and from $v'$ to $B''$ have colours $\varphi(vw)$.}
 \end{equation}

 So, if there are vertices $v'' \in A''$ and  $w'' \in B''$ such that $\varphi(v''w'') \cap \varphi(vw) \neq \emptyset$, then we can connect the two stars given by~\eqref{all12} using the edge $v''w''$, and obtain the desired cover. 
 Thus,
 \begin{equation}\label{no12}
  \text{no colour of $\varphi(vw)$ appears on an edge between $A''$ and $B''$.}
 \end{equation}

 Finally, putting~\eqref{no5} and~\eqref{no12} together, we see that  all edges between $A''$ and $B''$ must have colours from $\mathcal C_{v'w'}$. 
 Since we assume that $r\leq 5k/2$, this means that in fact, all of the $2(r-2k)\leq k$ colours from $\mathcal C_{v'w'}$ appear on each edge between $A''$ and $B''$. 
 Thus we can easily cover all of $A''\cup B''$ with one more monochromatic tree, and are done.
\end{proof}

\section{Lower bounds for complete bipartite graphs}\label{sec:lowerbip}

This section is devoted to the proof of Theorem~\ref{thm:lowerbound}. 
The theorem follows directly from Lemmas~\ref{cla2.1} and~\ref{lem:lowerbound} below, combined with Lemma~\ref{obsred4}.

\begin{lem}
 \label{cla2.1}
 For every $r,k,n, m$ with $r > k$ and  $n = \binom{r}{k},$ there exists an $(r,k)$-colouring $\varphi$ of $K_{n,m}$  such that $\tc(K_{n,m},\varphi) \geq r-k+1.$
\end{lem}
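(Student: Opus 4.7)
My plan is to exhibit an explicit $(r,k)$-colouring on the class $A$ of size $n = \binom{r}{k}$ by indexing the vertices of $A$ with the $k$-subsets of $[r]$: write $A = \{v_S : S \in \binom{[r]}{k}\}$, and define $\varphi(v_S w) := S$ for every $w \in B$. This is a valid $(r,k)$-colouring because each edge receives a set of exactly $k$ colours from $[r]$, and it has the key feature that the colour set on every edge incident to $v_S$ is exactly $S$, regardless of the $B$-endpoint.

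The next step is to observe that for every colour $i \in [r]$, the subgraph consisting of edges whose colour set contains $i$ is the complete bipartite graph between $A_i := \{v_S : i \in S\}$ and $B$. In particular it is connected, and consequently any monochromatic tree in colour $i$ has all of its $A$-vertices contained in $A_i$.

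To conclude, given any cover of $V(K_{n,m})$ by monochromatic trees $T_1, \ldots, T_t$, I would pick for each $T_j$ a colour $c_j$ witnessing its monochromaticity (choosing any element of $S$ if $T_j$ is a single vertex $v_S$, and an arbitrary colour otherwise). Then $V(T_j)\cap A \subseteq A_{c_j}$ for every $j$, and since the trees cover $A$, the set $I := \{c_1, \ldots, c_t\}$ must meet every $k$-subset $S \subseteq [r]$; otherwise $v_S$ would be uncovered. Equivalently, $[r]\setminus I$ contains no $k$-element subset, so $|[r]\setminus I| \leq k-1$, and therefore $t \geq |I| \geq r-k+1$.

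The only non-mechanical piece of the argument is finding the right colouring in the first place; once it is on the page, the lower bound reduces to the elementary observation that a transversal of $\binom{[r]}{k}$ in $[r]$ has size at least $r-k+1$. I do not foresee any serious obstacle.
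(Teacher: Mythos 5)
Your proposal is correct and uses exactly the same construction and argument as the paper: index $A$ by the $k$-subsets of $[r]$, colour each edge $v_S w$ with the set $S$, and observe that the colours used by any cover must leave fewer than $k$ colours unused, forcing at least $r-k+1$ trees. The only difference is that you spell out the transversal argument and the degenerate single-vertex trees slightly more explicitly than the paper does.
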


\begin{proof}
 Consider the complete bipartite graph with vertex sets $A = \binom{[r]}{k}$ and any set $B$. 
 Assign each edge $uv,$ with $u \in A$ and $v\in B$, the $k$-set of colours $u.$ 
 Then no set of $r-k$ or fewer monochromatic connected components in colours $i_1,\dots,i_l,\ l \leq r-k,$ can cover the vertices $a \in A$ which are subset of $[r]\setminus \{i_1,\dots,i_l\}.$
\end{proof}

The proof of the second bound is a bit more involved, using a similar technique as in~\cite{CFGLT12} by Chen, Fujita, Gy\'arf\'as, Lehel and T\'oth.

\begin{lemma}\label{lem:lowerbound}
 For every $r,k$ with $r > k$ and for $n \geq {r\choose k}\cdot{r-k \choose k}\cdot{r-2k \choose k}\cdot\ldots\cdot{r-\lfloor r/k\rfloor k\choose k}$, $m \geq  \lfloor r/k \rfloor -1$, there exists an $(r,k)$-colouring $\varphi$ of $K_{n,m}$ such that $\tc(K_{n,m},\varphi) \geq r-k+ \lfloor r/k \rfloor -1$.
\end{lemma}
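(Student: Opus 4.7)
The plan is to build an explicit construction in which one side $A$ of the bipartite graph is indexed by ordered tuples of disjoint $k$-subsets of $[r]$, convert the tree-cover bound into a Hall-type matching condition on an auxiliary bipartite graph, and then appeal to Lemma~\ref{obsred4} to promote the construction on $K_{|A|, q-1}$ to the required $K_{n,m}$. Let $q := \lfloor r/k \rfloor$, take $A$ to be the set of all ordered $q$-tuples $(S_1,\ldots,S_q)$ of pairwise disjoint $k$-subsets of $[r]$ (of which there are $\prod_{i=0}^{q-1}\binom{r-ik}{k}$, matching the hypothesis on $n$), and put $B = \{b_1,\ldots,b_{q-1}\}$. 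Define the colouring by $\varphi\bigl((S_1,\ldots,S_q)\,b_j\bigr) := S_j$ for $j = 1,\ldots,q-1$. The block $S_q$ is never used on an edge, but keeping it in the label forces each vertex of $A$ to ``reserve'' a further $k$-set of colours disjoint from the ones appearing on its edges; this reserved block is what produces the extra $\lfloor r/k\rfloor - 1$ contribution beyond Lemma~\ref{cla2.1}.

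Once the construction is fixed, I would analyse the monochromatic components of each subgraph $G_c$: they are exactly the stars $B_j^c := \{b_j\} \cup \{v \in A : c \in S_j(v)\}$ for $j \in [q-1]$, plus isolated vertices $\{v\}$ whenever $c$ lies in none of $S_1(v),\ldots,S_{q-1}(v)$ (distinct $b_j, b_{j'}$ sit in different components because the $S_j(v)$'s are pairwise disjoint). Any tree cover may therefore be reduced, without loss of generality, to a collection of ``star pieces'' $\{B_j^c : (c,j) \in P\}$ for some $P \subseteq [r] \times [q-1]$, together with singleton pieces for every uncovered vertex. Writing $C_j := \{c : (c,j) \in P\}$ and $J := \{j : C_j \neq \emptyset\}$, the cover size is
\[
 t \;=\; |P| \;+\; (q-1-|J|) \;+\; a,
\]
where $a$ counts labels $(S_1,\ldots,S_q)$ with $S_j \cap C_j = \emptyset$ for every $j \in J$. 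The desired bound $t \geq r-k+q-1$ is thus equivalent to $|P|+a-|J| \geq r-k$.

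The main case is $a=0$, where every disjoint tuple must hit some $C_j$ on its $j$-th block. I would translate this into a matching problem on the auxiliary bipartite graph $H$ between $[r]$ and $[q]\times[k]$, with $c$ joined to $(j,\ell)$ precisely when $c \notin C_j$ (no constraint for $j \notin J$): the covering assumption is exactly that $H$ has no matching saturating $[q]\times[k]$. Hall's theorem then produces a violating set $J^* \subseteq J$ with $|\bigcap_{j\in J^*}C_j| \geq r - k|J^*| + 1$, and combining with $|C_j| \geq 1$ for the remaining $j \in J\setminus J^*$ gives $\sum_{j\in J}|C_j| \geq |J^*|(r - k|J^*| + 1) + (|J| - |J^*|)$. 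Rearranging, this becomes $\sum_{j\in J}|C_j| \geq r - k + |J|$ provided $|J^*|(r-k|J^*|) \geq r-k$, an inequality that factors as $(|J^*|-1)(r - k(|J^*|+1)) \geq 0$ and is driven precisely by $qk \leq r$. In the complementary case $a \geq 1$, a parallel Hall-with-slack count shows that whenever $\sum_{j\in J}(|C_j|-1) = r-k-d$ with $d \geq 1$, there are in fact at least $d$ valid tuples, so $|P|+a-|J| \geq r-k$ here as well.

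The principal obstacle is the Hall-violation arithmetic together with its slack version: controlling the intersections $|\bigcap_{j\in J^*}C_j|$ over all $J^*\subseteq J$, and, in the slack case, lower-bounding $a$ by the deficit $d$, both require careful use of the defining identity $q = \lfloor r/k\rfloor$. Once $|P|+a-|J| \geq r-k$ is established on the canonical $K_{|A|,q-1}$, Lemma~\ref{obsred4} immediately lifts the construction to any $K_{n,m}$ with $n \geq |A|$ and $m \geq q-1$ while preserving the tree cover number, finishing the proof.
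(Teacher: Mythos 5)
Your construction is essentially the paper's: the paper takes the small side $A=[m]$ with $m=\lfloor r/k\rfloor-1$ and the large side $B$ consisting of $m$-tuples of pairwise disjoint $k$-sets, coloured by $\varphi(ix)=x_i$; your version only swaps the names of the two sides and carries a redundant $q$-th block $S_q$, which merely duplicates vertices (tuples agreeing in the first $q-1$ blocks induce identical colourings on all their edges), so it proves the same bound. Your analysis, however, is routed differently. The paper groups the stars by their centre, sorts the counts $a_1\leq\dots\leq a_m$, and argues in two lines: either some $a_i\geq r-k(m-i+1)+1$, in which case a direct summation gives $\sum a_j\geq r-k+m$, or all $a_i$ are below these thresholds and one greedily builds disjoint $k$-sets $\mathcal C_m,\dots,\mathcal C_1$ avoiding the colours used at each centre, producing an uncovered tuple. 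Your Hall-type reformulation on $[r]\times([q]\times[k])$ is correct and, pleasingly, bottoms out in exactly the same inequality $(s-1)\bigl(r-k(s+1)\bigr)\geq 0$ driven by $s+1\leq\lfloor r/k\rfloor$; it is just a heavier way of packaging the paper's greedy step.

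The one genuine soft spot is your case $a\geq 1$. The claim that a deficit $d=r-k-\sum_{j\in J}(|C_j|-1)\geq 1$ forces at least $d$ uncovered tuples is a real combinatorial assertion (a quantitative count of systems of disjoint representatives, not just existence), and ``a parallel Hall-with-slack count'' does not discharge it; as written this step is unproved. Fortunately the whole case is unnecessary: any singleton tree $\{v\}$ with $v=(S_1,\dots,S_q)\in A$ can be replaced by the full star $B_j^c$ for any $j\in[q-1]$ and any $c\in S_j(v)$, which contains $v$, is monochromatic, and does not increase the number of trees in the cover. (The same replacement is what lets the paper assume every tree in the cover is a star centred on the small side.) After this reduction you may take $a=0$ outright, and your main case finishes the proof. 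I would make that reduction explicit and delete the slack argument.
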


\begin{proof}
 By Lemma \ref{obsred4} it suffices to prove the case for $n = {r\choose k}\cdot{r-k \choose k}\cdot{r-2k \choose k}\cdot\ldots\cdot{r-\lfloor r/k\rfloor k\choose k}$ and $m =  \lfloor r/k \rfloor -1$. 
 Let $A = [m]$ and 
 $$B = \{ x \in \binom{[r]}{k} ^m: x_i \cap x_j = \emptyset \mbox{ if } i \neq j   \}.$$ 
 We define an $(r,k)$-edge-colouring $\varphi$ of the complete bipartite graph on vertices $A \cup B$ as follows: for $i \in A$ and $x \in B,$ set $\varphi(ix) = x_i.$

 It is easy to see that every monochromatic connected component can be viewed as a star centered at some vertex in $A$. 
 Hence, in order to prove Lemma~\ref{lem:lowerbound}, all we need to show is that any set $\mathcal{S}$ of stars with their centers in $A$ that cover $A \cup B$ has cardinality at least  $r - k + m = r - k + \lfloor r/k \rfloor - 1$.

 So fix such a set $\mathcal S$. 
 For $i \in A,$ let $a_i$ be the number of stars of $\mathcal{S}$ centered at $i$. 
 Observe that we may  assume
 \begin{equation}\label{1a1}
  1\leq a_1 \leq \dots \leq a_m.
 \end{equation}

 We claim that there is a vertex $i \in A$ such that
 \begin{equation}\label{ai}
  a_i \geq r - k(m-i+1) + 1.
 \end{equation}
 Indeed, otherwise, we have $a_m \leq r-k$, $a_{m-1} \leq r-2k, \dots ,a_1 \leq r - mk$. 
 This means that we can choose a set $\mathcal{C}_m$ of $k$ colours such that no star from $\mathcal S$ centered at $a_m$ uses a colour of $\mathcal{C}_m$. 
 Moreover, for $a_{m-1}$ there is a set $\mathcal{C}_{m-1}$ of $k$ colours such that $\mathcal{C}_{m} \cap \mathcal{C}_{m-1} = \emptyset$ and such that no star from $\mathcal S$  centered at $a_{m-1}$ uses a colour of $\mathcal{C}_{m-1}$. 
 Continuing in this manner, define sets $\mathcal C_i$ for all $i\leq m$.
 Then, the vertex $(\mathcal{C}_{1}, \dots, \mathcal{C}_{m}) \in B$ is not covered by $\mathcal{S},$ contradicting the fact that $\mathcal{S}$ covers $A \cup B.$

 Using~\eqref{1a1} and~\eqref{ai}, we calculate that
 \begin{align*}
  \sum_{j=1}^m a_j & \geq \sum_{j=1}^{i-1} 1 + \sum_{j=i}^{m} a_i \\
                   & \geq (i-1) + (m-i+1)(r-k(m-i+1)+1)           \\
                   & = r-k+m +  (m-i)(r-2k) -k(m-i)^2             \\
                   & \geq r-k+m,
 \end{align*}
 where the last inequality holds
 since  $\lfloor r/k \rfloor - i \leq r/k - 1$. 
 Thus, $\mathcal S$ contains at least $r - k + m = r - k + \lfloor r/k \rfloor - 1$ stars, which is as desired.
\end{proof}

Observe that the colouring $\varphi$ from Lemma~\ref{lem:lowerbound} attains the bound $r-k+ \lfloor r/k \rfloor -1=r-k+m$ for the size of the cover. 
That is, $A \cup B$ can be covered by $r-k+m$ monochromatic stars: just take $r-k+1$ stars centered at vertex $1 \in A$,  in addition to $m-1$ stars covering the vertices in  $A\setminus\{1 \}$.

\section{Complete graphs}\label{sec:comp_gr}

In this section we prove Theorem \ref{thm:52} and confirm Conjecture~\ref{conj:comp_r-k} for $k \geq r/2 -1$. 
On the road to Theorem~\ref{thm:52}, we prove a result of possible independent interest, Theorem~\ref{mt2}, which bounds the number of vertices in a minimal graph that requires $2$, or $3$, monochromatic components in its cover for some $(r,k)$-colouring.

We say a vertex {\it sees} a colour if it is incident to an edge that carries this colour.

\begin{lemma}\label{lbn}
 Let  $\varphi$ be an $(r,k)$-colouring of $K_n$ such that  $\tc(K_n,\varphi) = t$  and every vertex sees each colour. Then $ k \binom{n}{2} \leq r\left (t-1 + \binom{n-2(t-1)}{2}\right).$
\end{lemma}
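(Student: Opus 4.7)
The plan is a double-counting argument that bounds the edges of each colour class separately, then sums over the $r$ colours. For a colour $i \in [r]$, let $E_i := \{e \in E(K_n) : i \in \varphi(e)\}$ and let $G_i$ be the graph $([n], E_i)$. Since each edge of $K_n$ receives exactly $k$ colours, we have the exact identity
\[
\sum_{i=1}^r |E_i| \;=\; k\binom{n}{2},
\]
so it suffices to show that for every colour $i$,
\[
|E_i| \;\leq\; (t-1) + \binom{n-2(t-1)}{2}.
\]

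To prove this per-colour bound, I first argue that $G_i$ has at least $t$ connected components. Indeed, if $G_i$ had at most $t-1$ components, taking a spanning tree of each would produce a cover of $V(K_n)$ by fewer than $t$ monochromatic trees (all in colour $i$), contradicting $\tc(K_n,\varphi) = t$. Second, the hypothesis that every vertex sees every colour means every vertex is incident to at least one edge of $E_i$, so no component of $G_i$ is an isolated vertex. Hence every component of $G_i$ has at least two vertices.

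Now let the components of $G_i$ have sizes $s_1 \geq s_2 \geq \ldots \geq s_p$, with $p \geq t$, each $s_j \geq 2$, and $\sum_j s_j = n$. Then $|E_i| \leq \sum_{j=1}^p \binom{s_j}{2}$, and by convexity of $x \mapsto \binom{x}{2}$ this is maximised by concentrating as much mass as possible in one component: choose $p = t$, $s_2 = s_3 = \ldots = s_t = 2$, and $s_1 = n - 2(t-1)$. This gives
\[
|E_i| \;\leq\; \binom{n-2(t-1)}{2} + (t-1)\binom{2}{2} \;=\; (t-1) + \binom{n-2(t-1)}{2},
\]
and summing over $i \in [r]$ yields the desired inequality.

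I do not foresee any real obstacle: the crux is the single observation that the components of any one colour already form a candidate monochromatic tree cover, which forces each colour class to split into at least $t$ pieces; after that the estimate is a standard convexity argument on $\sum \binom{s_j}{2}$. The assumption that every vertex sees every colour is used solely to rule out size-one components (isolated vertices), without which the extremal configuration would have singletons and the bound would weaken.
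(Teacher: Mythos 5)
Your proof is correct and follows essentially the same argument as the paper: double-count the colour incidences $k\binom{n}{2}$, note that each colour class must have at least $t$ components (else their spanning trees give a too-small cover) each of size at least $2$, and bound $|E_i|$ by the extremal configuration of $t-1$ single-edge components plus one clique on the remaining $n-2(t-1)$ vertices. The only difference is that you spell out the convexity step, which the paper leaves implicit.
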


\begin{proof}
 Since every edge has $k$ colours it follows that the total number of colours used in $\varphi$, with repetitions allowed, is $k \binom{n}{2}$. 
 On the other hand, since $\tc(K_n,\varphi) = t$, every colour $i$  has at least $t$ components. 
 Each of these components has at least two vertices, by our assumption on $\varphi$. 
 So at most $t-1+ {n - 2(t-1)\choose 2}$  edges have colour $i$ (as in the `worst' case colour $i$ has $t-1$ single-edge components and is complete on the remaining vertices).
\end{proof}

We say that a $r$-colouring $\varphi$ of a graph $G$ is {\it $t$-critical} if $\tc(G,\varphi) = t$ and for each $v\in V(K_n)$, the graph $G\setminus \{v\}$ can be covered by $t-1$ monochromatic components.

\begin{thm}
 \label{mt2}
 Let $\varphi$ be a $t$-critical $(r,k)$-colouring  of $K_n$, for $t\in\{2,3\}$.
 If $t= 2$ then $n \leq r,$ and if $t = 3$ then $n \leq r + \binom{r}{2}.$\\ 
 Moreover, if in $\varphi$ every vertex sees each colour, then $t=3$ and $n \leq \binom{r}{2}$.
\end{thm}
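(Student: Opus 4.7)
My strategy is a pigeonhole on ``colour signatures''. For each vertex $v$, by $t$-criticality I fix a cover of $K_n \setminus \{v\}$ by $t - 1$ monochromatic trees and let $S_v$ be the multiset of their colours. There are $\binom{r+t-2}{t-1}$ such multisets, equal to $r$ for $t = 2$ and $r + \binom{r}{2}$ for $t = 3$. The plan is: if $n$ exceeds this number, two vertices $v, w$ must share the same signature, and I will derive $\tc(K_n, \varphi) \leq t - 1$, contradicting $t$-criticality.

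The case $t = 2$ is immediate: $S_v = \{c_v\}$ means $G_{c_v}[V \setminus \{v\}]$ is spanning-connected, and sharing $c_v = c_w = c$ forces $G_c[V]$ itself to be connected (any $c$-neighbour of $v$ in $V \setminus \{w\}$ lies in $V \setminus \{v, w\}$, hence inside $G_c[V \setminus \{v\}]$), yielding $\tc \leq 1$. For $t = 3$ with $S_v = S_w = \{a, a\}$, the fact that $G_a[V \setminus \{v\}]$ has at most $2$ components while $G_a[V]$ has at least $3$ (else $\tc \leq 2$) forces $\{v\}$ to be a singleton $a$-component, and likewise $\{w\}$; so $G_a$ decomposes as $\{v\} \sqcup \{w\} \sqcup (V \setminus \{v, w\})$. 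Since $v$ is $a$-isolated, $a \notin \varphi(vw)$, so any $c \in \varphi(vw)$ gives a $2$-cover of $V$ by the $a$-tree on $V \setminus \{v, w\}$ together with the $c$-edge $vw$.

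The delicate case is $t = 3$ with $S_v = S_w = \{a, b\}$ for $a \neq b$. Write the covers as disjoint pairs $(A_v, B_v)$ and $(A_w, B_w)$, with $A_v, A_w$ the $a$-trees and $B_v, B_w$ the $b$-trees. The key observation $(\star)$ is: if $v$ had an $a$-edge into $A_v$, then $(A_v \cup \{v\}, B_v)$ would already be a $2$-cover of $V$; so $N_a(v) \subseteq B_v$ and, symmetrically, $N_b(v) \subseteq A_v$, with analogous inclusions for $w$. Partition $V \setminus \{v, w\}$ into $R_1 = A_v \cap A_w$, $R_2 = A_v \cap B_w$, $R_3 = B_v \cap A_w$, $R_4 = B_v \cap B_w$, and do a case analysis on which of $a, b$ lies in $\varphi(vw)$ (at most one can by $(\star)$, since $w \in N_a(v) \cap N_b(v)$ would force $w \in A_v \cap B_v = \emptyset$) and on which of $R_1, R_4$ is non-empty. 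The generic situation $R_1, R_4 \neq \emptyset$ is settled by the $2$-cover $(A_v \cup A_w,\ B_v \cup B_w)$: an $a$-tree joined through $R_1$ together with a $b$-tree joined through $R_4$. Degenerate configurations collapse into, for example, $V = B_v \cup B_w$ as two $b$-trees when $R_1 = R_4 = \emptyset$, or reduce to splicing $A_v$ with the $a$-edge $vw$ through a crucial vertex of $B_v \cap A_w$; and when $\varphi(vw) \cap \{a, b\} = \emptyset$ (possible only for $r \geq k + 2$), a bridging colour $c \in \varphi(vw) \setminus \{a, b\}$ plays this role. Each sub-case concludes with $\tc \leq 2$, contradicting $\tc = 3$.

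The \emph{moreover} then follows by observing that ``every vertex sees every colour'' forbids $v$ from being $c$-isolated for any $c$. This kills the $t = 2$ case outright and also rules out the $a = b$ sub-case of $t = 3$, as both previously forced $v$ to be isolated in some colour. Hence every signature $S_v$ is a genuine $2$-subset of $[r]$, leaving only $\binom{r}{2}$ pigeonhole classes and the sharper bound $n \leq \binom{r}{2}$. The main obstacle is the $a \neq b$ sub-case of $t = 3$, where the partition into $R_1, R_2, R_3, R_4$ must be handled exhaustively; the sub-case $\varphi(vw) \cap \{a, b\} = \emptyset$ in particular requires careful bookkeeping to check that every configuration still admits a $2$-tree cover.
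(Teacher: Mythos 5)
Your overall strategy is the same as the paper's: the paper's ``$t$-critical function'' $f$ is exactly a colour signature, and its injectivity is your pigeonhole. The counts $r$ and $r+\binom r2$ match, the $t=2$ case, the $\{a,a\}$ case, the generic case $R_1,R_4\neq\emptyset$, and the \emph{moreover} reduction to $\binom r2$ are all fine. But there is a genuine gap, and it sits exactly where you admit the bookkeeping is delicate. The paper does not take an \emph{arbitrary} cover of $K_n\setminus\{v\}$: it insists that the set of colours used be of \emph{minimum size} (condition (2) of its critical function), and it uses this minimality precisely once --- to kill the configuration you are left with when $R_1=R_4=\emptyset$ and the positions are crossed, i.e.\ $w\in A_v$ and $v\in B_w$. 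In that configuration one computes $A_v=R_2\cup\{w\}$, $B_v=A_w=R_3$, $B_w=R_2\cup\{v\}$, and your observation $(\star)$ forces $\varphi(vw)\cap\{a,b\}=\emptyset$. Your proposed finish --- a bridging colour $c\in\varphi(vw)$ --- produces one tree covering $\{v,w\}$ (and perhaps a bit more), but the second tree must then cover $R_2\cup R_3=V\setminus\{v,w\}$, and nothing guarantees this set lies in a single monochromatic component: $R_2$ and $R_3$ sit in \emph{different} $a$-components and \emph{different} $b$-components, and all edges between them carry only colours from $[r]\setminus\{a,b\}$, which may be scattered over many components. (For $r-2=1$ or $2$ remaining colours one can still force a $2$-cover by the structure of $2$-coloured complete bipartite graphs, but not in general.)

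Worse, this is not merely a missing computation: in the crossed configuration $V\setminus\{v\}$ is covered by the two $a$-components $R_2\cup\{w\}$ and $R_3$, and $V\setminus\{w\}$ by the two $b$-components $R_2\cup\{v\}$ and $R_3$. So the \emph{minimal} signatures of $v$ and $w$ are the distinct singletons $\{a\}$ and $\{b\}$; there is no genuine collision, and I see no reason why such a configuration cannot occur inside a $3$-critical colouring with $\tc=3$. In other words, your stronger claim --- that arbitrary, non-minimal signatures never collide --- is unsubstantiated and quite possibly false. The repair is exactly the paper's device: fix for each $v$ a cover of $K_n\setminus\{v\}$ using an inclusion-minimal (equivalently, minimum-cardinality) set of colours. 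Then a signature that is really a singleton $\{a\}$ forces $v$ to miss colour $a$ entirely (your $\{a,a\}$ analysis), and for genuine two-element signatures the crossed sub-case cannot arise, since it would allow the signature of $v$ to shrink to $\{a\}$. With that amendment your argument closes; without it, the $R_1=R_4=\emptyset$ crossed sub-case is an open hole.
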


We remark that for the proof of Theorem \ref{thm:52}, we only need Theorem~\ref{mt2} for the special case of colourings $\varphi$ where every vertex sees each colour, and thus $t=3$. 
But as the proof of the whole statement does not require any extra effort, we prefer to state our result as above.

Before we prove Theorem \ref{mt2}, we need some notation.
For a given $t$-critical $r$-colouring $\varphi$ of~$K_n$ we will say that the function $f:V(K_n) \to \cup_{\ell <t} \binom{[r]}{\ell}$ is {\it $t$-critical} for $\varphi$ if $f$ satisfies the following properties:
\begin{enumerate}
 \item[(1)] If $f(v) = \{i_1,\dots,i_\ell\}$ then it is possible to cover all vertices but $v$ by $t-1$ monochromatic components in colours $i_1,\dots,i_\ell.$
 \item[(2)] It holds that $|f(v)| \leq |f'(v)|$ for all functions $f'$ satisfying (1).
\end{enumerate}

Clearly, for every $t$-critical $r$-colouring $\varphi$ of $K_n$ there is a $t$-critical function. 
Moreover, note that for any given vertex $v$, the monochromatic components considered in (1) are non trivial, because otherwise, we can cover the vertices of $K_n$ by $t-1$ monochromatic components, one of which is given by the edge between $v$ and the trivial component.

\begin{proof}[Proof of Theorem \ref{mt2}]
 The first part of Theorem~\ref{mt2} follows from proving injectivity of $t$-critical functions, for $t=2,3$, respectively, since then $n$ is at most the cardinality of the image of injection $f$. 
 The second assertion of the theorem will follow as a by-product of our proof.

 Suppose $u \in V(K_n)$ with $f(u) = \{i\}$. 
 Then, depending on whether $t=2$ or $t=3$, there are one or two monochromatic  components  in colour $i$ covering every vertex other than~$u$. 
 Hence, no edge  incident with $u$ can have colour $i$ (as we need $t$ components to cover $K_n$). 
 Also, every vertex $v \in V(K_n)$ other than $u$ has an incident edge that uses colour $i$. 
 Thus $f(v) \neq \{i\}$ for all $v\neq u.$

 Notice that if every vertex sees each colour, then vertex $u$ from the previous paragraph cannot exist. 
 Thus, in that case, we have $f(u)\neq\{i\}$ for all $u\in V(K_n)$ and all $i\in [r]$. 
 In particular, $t=3$.

 It remains to consider vertices $u \in V(K_n)$ with $f(u) = \{i,j\}$, for $ i \neq j$, and $t=3$. 
 By~(1), there exists monochromatic  components $I_u,J_u$ in colours $i,j$, respectively, covering every vertex of $K_n$ other than $u$. 
 Assume, for the sake of contradiction, that there is a vertex $v\neq u$ with $f(v) = f(u) = \{i,j\},$ and let $I_v,J_v$ the monochromatic components on colours $i,j,$ respectively, covering every vertex of $K_n\setminus \{v\}$. 
 W.l.o.g., we may assume that $v \in I_u$.

 Note that monochromatic components $I_u$ and $I_v$ are vertex-disjoint (as otherwise they would be identical,  but we know that $v \notin I_v$). 
 A second observation is that there is a vertex $w \in J_u \setminus (I_u \cup I_v)$ with $w \neq v$. 
 If not, $I_u \cup I_v$ covers $K_n \setminus \{u\},$ contradicting that the colouring is $3$-critical.

 These observations imply that $J_u = J_v,$ because $w \in J_u \cap J_v$. 
 Hence, $J_u$ covers every vertex of~$K_n$ other than $u$ and $v$. 
 But any monochromatic  component induced by the edge $uv$ covers $u$ and $v$, so $K_n$ is coverable by two monochromatic components, a contradiction. 
 Thus $f(v) \neq \{i,j\}$ for all vertices $v$ other than $u.$
\end{proof}

It is worth noting that the proof of Theorem \ref{mt2} makes no use of the fact that all edges have the same number of colours. 
So the theorem is still valid for a generalised notion of edge-colourings, where each edge is assigned a subset of $[r]$ of arbitrary size.

Now we are ready to prove Theorem \ref{thm:52}.

\begin{proof}[Proof of Theorem \ref{thm:52}]
 By Lemma~\ref{frr-2}, we already know that $\tc_{5,2}(K_n)\geq \tc_{4,2}(K_n)\geq 2$. 
 So we only need to show that $\tc_{5,2}(K_n)\leq 2$.

 For the sake of contradiction, assume $K_n$ has a $(5,2)$-colouring $\varphi$ with $\tc(K_n,\varphi)=3$. 
 We can assume $\varphi$  is $3$-critical. 
 Observe that every triangle is contained in a monochromatic  component, since in every triangle there are  at least two edges sharing  a colour.

 We claim that
 \begin{equation}\label{rarevertices}
  \mbox{each vertex sees each colour.}
 \end{equation}

 For this, assume that $u \in V(K_n)$ does not see colour $5$. 
 Let $U_1,U_2$ be monochromatic components in colours $1,2,$ respectively, both of them containing $u$. 
 Every edge from $u$ to any vertex $v\in V(K_n)\setminus (U_1\cup U_2)$ has the colour set $\{3,4\}$. 
 Such a vertex $v$ must exist, since $\tc(K_n,\varphi) = 3.$ Let $U_3,U_4$ be monochromatic components in colours $3,4,$ respectively, both of them containing $v$. 
 Since $\tc(K_n,\varphi)=3$, there is a vertex $w$ not covered by $U_3\cup U_4$.
 Then $\varphi (uw) = \{1,2\}$. 
 Hence, $vw$ does not have any of the colours $1,2,3$ and $4$, because $v\notin U_1 \cup U_2$ and $w\notin U_3\cup U_4$.
 This contradicts the fact that every edge has two colours, thus proving (\ref{rarevertices}).

 Now, on the one hand, Theorem \ref{mt2} and (\ref{rarevertices}) imply that $n \leq 10$. 
 On the other hand, Lemma \ref{lbn} with $r=5$, $k=2$, $t=3$, together with $(\ref{rarevertices})$, gives that $n > 10$. 
 We thus reached the desired contradiction.
\end{proof}

We conclude this section confirming Conjecture~\ref{conj:comp_r-k} for some special cases, namely, when $k \geq r/2 - 1$.
The proof follows by combining Lemmas~\ref{compr-k} and~\ref{compr-k2} below, and observing that   $\tc_{4,1}(K_n) \leq 3$ (see~\cite{Duc79, Gya77}).

\begin{lem}
	\label{compr-k}
	If $k \geq (r-1)/2$ then $\tc_{r,k}(K_n) \leq r-k.$
\end{lem}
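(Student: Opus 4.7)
My plan is a case analysis based on a fixed vertex $v \in V(K_n)$. Given an $(r,k)$-colouring $\varphi$ of $K_n$ with $k \geq (r-1)/2$, i.e.\ $r \leq 2k+1$, I want to exhibit $r-k$ monochromatic trees covering $V(K_n)$. The key observation is that for any set $\mathcal{C} \subseteq [r]$ of size $r-k$, the $r-k$ monochromatic stars centered at $v$ in the colours of $\mathcal{C}$ cover $V(K_n)$ if and only if no edge $vu$ has colour set equal to the $k$-subset $\mathcal{D} := [r] \setminus \mathcal{C}$; indeed, $\varphi(vu)$ and $\mathcal{D}$ are both $k$-subsets of $[r]$, so $\varphi(vu) \cap \mathcal{C} = \emptyset$ would force them to be equal.

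First I would handle the easy case: some $k$-subset $\mathcal{D} \subseteq [r]$ is not the colour set of any edge incident to $v$. Then taking $\mathcal{C} = [r] \setminus \mathcal{D}$ yields a cover by $r-k$ monochromatic stars at $v$. Note that this case is automatic when $n - 1 < \binom{r}{k}$ (by pigeonhole), and more generally whenever the multiset of edge colour sets at some vertex $v$ does not realise every $k$-subset.

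The remaining (hard) case is when \emph{every} vertex $v$ has \emph{every} $k$-subset of $[r]$ realised as the colour set of some incident edge. Here, every vertex sees every colour and $n \geq \binom{r}{k} + 1$. To handle this, I would assume for contradiction that $\tc(K_n,\varphi) > r-k$, pass to an $(r-k+1)$-critical colouring (applying Lemma \ref{obsred5} if needed), and combine Lemma \ref{lbn}---specialised to $t = r-k+1$---with a structural critical-colouring argument in the spirit of the proof of Theorem \ref{thm:52}.

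The main obstacle is precisely this hard case. Theorem \ref{mt2} is stated only for $t \in \{2,3\}$, whereas here $t = r-k+1$ can be as large as $k+2$, so its direct analogue cannot be applied. A possible alternative is an edge-absorption argument in the spirit of Lemma \ref{lem:mt1}: fix an edge $vw$ with $\mathcal{D}_0 := \varphi(vw)$, take the $r-k$ stars at $v$ in colours of $[r] \setminus \mathcal{D}_0$, and try to absorb each uncovered vertex $u$ (i.e.\ one with $\varphi(vu) = \mathcal{D}_0$) into an existing tree through an intermediate common neighbour $x$ of $v$ and $u$ whose edges $vx, ux$ share a colour in $[r] \setminus \mathcal{D}_0$; such a neighbour is abundant thanks to the rich colour structure guaranteed at $v$ and $u$ in the hard case, and controlling its existence uniformly in $n$ is the delicate point.
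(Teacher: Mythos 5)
Your proposal is incomplete: the ``hard case'' (every vertex has every $k$-subset of $[r]$ realised on some incident edge) is exactly where the content of the lemma lies, and you do not resolve it. The two routes you sketch for it both fail as stated. First, combining Lemma~\ref{lbn} with a criticality argument cannot work without an analogue of Theorem~\ref{mt2} for $t=r-k+1$: Lemma~\ref{lbn} reads $k\binom{n}{2}\leq r\bigl(t-1+\binom{n-2(t-1)}{2}\bigr)$, whose right-hand side exceeds the left-hand side automatically once $n$ is large (the gap $(r-k)\binom{n}{2}$ grows quadratically while the correction terms are linear in $n$), so it only yields a contradiction when paired with an \emph{upper} bound on $n$ of the kind Theorem~\ref{mt2} provides for $t\in\{2,3\}$ --- and you correctly note that no such bound is available here. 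Second, the absorption idea is not justified: for an uncovered $u$ (with $\varphi(vu)=\mathcal D_0$) and an arbitrary $x$, the sets $\varphi(vx)$ and $\varphi(ux)$ may be disjoint when $r=2k+1$, or may meet only inside $\mathcal D_0$, so the required intermediate vertex need not exist; the ``rich colour structure'' at $v$ and $u$ does not by itself produce one. So what you have is a correct and easy reduction to a genuinely open subcase, not a proof.

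The paper's actual argument sidesteps all of this with one observation: fix any monochromatic component, say in colour $r$, with vertex set $A$, and let $B=V(K_n)\setminus A$ (if $B=\emptyset$ you are done with one tree). No edge between $A$ and $B$ carries colour $r$, since such an edge would enlarge the component; hence the complete bipartite graph between $A$ and $B$ inherits an $(r-1,k)$-colouring, and the hypothesis $k\geq(r-1)/2$ puts it in the first case of Theorem~\ref{thm:mainBip} (Lemma~\ref{lem:mt1}), giving a cover of $A\cup B=V(K_n)$ by $(r-1)-k+1=r-k$ monochromatic trees. If you want to salvage your approach, the cleanest fix is to replace your hard-case analysis by this reduction to the bipartite bound already established in Section~\ref{sec:upperbip}.
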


\begin{proof}
	Given an $(r,k)$-coloured  $K_n$, consider the complete bipartite subgraph between any fixed monochromatic  component and the rest of $K_n$. 
	Since this graph inherits an $(r-1,k)$-colouring, and since $r-1 \leq 2k$, Theorem \ref{thm:mainBip} yields a cover by $(r-1)-k+1 = r-k$ monochromatic components.
\end{proof}

\begin{lem}
	\label{compr-k2}
	If $k = r/2 - 1$ and $k\geq 2$, then $\tc_{r,k}(K_n) \leq r-k$.
\end{lem}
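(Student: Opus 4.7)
The plan is a case analysis according to whether some vertex $v$ ``avoids'' some $k$-subset of colours.

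\medskip

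\noindent\emph{Case 1.} Suppose there exist a vertex $v\in V(K_n)$ and a $k$-subset $\mathcal{C}\subseteq[r]$ such that no edge incident with $v$ has colour set equal to $\mathcal{C}$. I claim that the $r-k=k+2$ stars centred at $v$, one in each colour of $[r]\setminus\mathcal{C}$, already cover $V(K_n)$. Indeed, for any $u\neq v$ one has $\varphi(vu)\neq\mathcal{C}$, and since both are $k$-sets this forces $\varphi(vu)\not\subseteq\mathcal{C}$; hence $\varphi(vu)\cap([r]\setminus\mathcal{C})\neq\emptyset$ and $u$ lies in some chosen star, while $v$ is their common centre. This gives the bound $\tc(K_n,\varphi)\leq k+2$.

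\medskip

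\noindent\emph{Case 2.} Otherwise every vertex realises every $k$-subset of $[r]$ as the colour set of some incident edge; in particular every vertex sees every colour, every monochromatic component has at least two vertices, and $n\geq\binom{r}{k}+1$. Here I would use the bipartite reduction from the proof of Lemma~\ref{compr-k}: pick any monochromatic component~$M$ in colour~$1$ (so $|M|\geq 2$) and consider the bipartite graph $G^*$ between $M$ and $V\setminus M$, which inherits an $(r-1,k)=(2k+1,k)$-colouring. Because $k\geq 2$ implies $(r-1)/2>k\geq 2(r-1)/5$, the middle case of Theorem~\ref{thm:mainBip} yields a cover of $G^*$ by at most $2(r-1)-3k+1=k+3$ monochromatic trees, which is one more than we want.

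\medskip

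To save that tree, I would run the construction of Lemma~\ref{lem:mt_2k+p} inside $G^*$: it produces $k$ components through a chosen edge $vw$ (with $v\in M,\,w\notin M$), then $2(r'-2k)=2$ components through a second edge $v'w'$, and in the worst case one further ``$+1$'' tree covering the leftover $A''\cup B''$, where $A''\subseteq M$ and $B''\subseteq V\setminus M$. The key observation is that $M$ is already a monochromatic tree in $K_n$ (in colour~$1$, which is invisible to the bipartite colouring), so it covers $A''\subseteq M$ for free. It then remains to absorb $B''$ into an already counted tree; in each subcase of the endgame of Lemma~\ref{lem:mt_2k+p} one finds a colour from $\mathcal C_{v'w'}\cup \varphi(vw)\cup\mathcal C$ (notation as in that proof) that connects $B''$ to the existing cover, using the non‑bipartite structure of $K_n$ (edges inside $V\setminus M$, in particular) and the Case~2 assumption. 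Replacing the ``$+1$'' tree by~$M$ then produces a cover of size $k+2$.

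The main obstacle I expect is precisely this endgame adaptation in Case~2: one must verify, in each of the subcases of Lemma~\ref{lem:mt_2k+p}, that once $A''$ is handled by~$M$ the set $B''$ can indeed be folded into one of the existing $k+2$ trees without creating a new one. The Case~2 hypothesis (every $k$-subset realised at every vertex) together with the extra freedom available in $K_n$ compared with the purely bipartite setting should be enough to carry this out, but the case-by-case verification is the delicate part of the argument.
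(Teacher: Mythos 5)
Your Case~1 is correct and is a genuinely nice reduction not present in the paper: if some vertex $v$ misses some $k$-set $\mathcal C$, then every edge at $v$ carries a $k$-set different from $\mathcal C$ and hence a colour outside $\mathcal C$, so the $r-k$ stars at $v$ in the colours of $[r]\setminus\mathcal C$ cover $K_n$. (The paper instead works directly with the bipartition induced by a colour-$(2k+2)$ component, a fixed edge $vw$, and a final special treatment of $k=2$.)

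Case~2, however, is where the lemma actually lives, and there is a genuine gap. First, the arithmetic of your endgame does not close: the cover you describe consists of the $k$ components through $vw$, the $2(r'-2k)=2$ components through $v'w'$, and then either the ``$+1$'' tree or its replacement $M$ --- in either case $k+3$ trees, not $k+2$. Since $M$ lives in a colour invisible to the bipartite colouring, it is necessarily distinct from all $k+2$ bipartite trees, so substituting it for the ``$+1$'' tree saves nothing; to reach $k+2$ you must genuinely eliminate a tree, which your sketch does not do. Second, even granting the count, the assertion that $B''$ can be ``folded into'' an already counted tree in each subcase is precisely the hard step, and you leave it unverified, as you yourself acknowledge. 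For comparison, in its last subcase the paper's cover is $M$, the $k$ components through $vw$, and a single colour-$(2k+2)$ star at $w$ covering all of $B'$ --- it never introduces two further components through a second edge $v'w'$ --- so your structure of $k+2$ bipartite trees plus $M$ cannot be repaired without reorganising the cover, not merely relabelling its last piece.
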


\begin{proof}
	Let $A$ be the vertices covered by any fixed monochromatic  component in colour $2k+2,$ and let $(A,B)$ be the complete bipartite graph with partitions $A$ and $B = V(K_n) \setminus A$, with its inherited $(r-1,k)$-colouring.
	We can assume $B\neq\emptyset$.
	
	Fix an edge $vw \in E(A,B)$ with $v \in A$ and $w \in B$, coloured in $\{1,\dots,k\}$, say. 
	Let $A'\subseteq A$ and $B'\subseteq B$ be the sets of vertices not covered by the union of the $k$ monochromatic components in colours $1,\ldots ,k$ that contain the edge $vw$.
	Note that the star centered at $v$ with leaves $B'$  inherits a $(k+1,k)$-colouring and thus, we can cover~$B'$ with two monochromatic stars at $v$. 
	So, since $k+2 = r-k$, we can assume that $A' \neq \emptyset$, and by symmetry, also $B' \neq \emptyset$.
	
	Assume that there is a vertex $w'\in B'$ such that
	\begin{equation} \label{thereisb'}\text{edges $vw'$ and $ww'$ share at least a colour,}\end{equation} say this colour is $k+1$.
	 Then, there are at least $k+1$ monochromatic components, in colours $1,\ldots,k+1$, that contain both $v$ and $w$. 
	 Let $A''\subseteq A'$ and $B''\subseteq B'$ be the sets of vertices not covered by these components.
	
	Observe that every edge between $v$ and $B''$, or between $w$ and $A''$ has colours $\{k+2,\dots,2k+1\}$. 
	So, if there is an edge from $A''$ to $B''$ using one of the colours in $\{k+2,\dots,2k+1\}$, then we can cover all of $A''\cup B''$ with one monochromatic component. 
	Combined with the $k+1$ components from above, we obtain a  cover with $k+2 = r-k$ monochromatic components. 
	So we may assume that every edge between $A''$ and $B''$ avoids colours $\{k+2,\dots,2k+1\}$. 
	In other words, each of these edges has colours $[k],$ and again, we can cover $A \cup B$ with $k+2 = r-k$ monochromatic components.
	
	So from now on, assume that~\eqref{thereisb'} does not hold. 
	Then $k=2$ (and thus, $r=6$).
	For $i \in \{3,4,5\}$ let $B_i := \{w' \in B': i \notin \varphi(vw') \}$. 
	Then $ww'$ is coloured by $\{i,6\}$ if $w' \in B_i$. 
	Hence, it is possible to cover $B'=B_3\cup B_4\cup B_5$ with one monochromatic component in colour $6$. 
	Together with the component $A$, and the $k$ components from above, we obtain  a cover of $A\cup B$ with $k+2 = r-k$ components, as desired.
\end{proof}

\section{Ramsey numbers for $(r,k)$-colourings}\label{sec:ramsey}

In this section, we discuss the set-Ramsey number $\ram_{r,k}(H)$ as defined in the introduction.
We can bound $\ram_{r,k}(H)$ with the help of the usual $r$-colour Ramsey number $\ram_r(H)$. 
In fact, in the same way as we obtained our bounds on $\tc_{r,k}$ in Section~\ref{sec:easy}, one can prove  (see also \cite{XSSL09}) that for every graph $H$ and integers $r > k > 0$,

\begin{equation}\label{r_bound-new}
 \text{$\ram_{r-k+1}(H)\geq \ram_{r,k}(H)\geq \ram_{\lfloor \frac rk\rfloor}(H)$.}
\end{equation}

Both bounds are not best possible as already the example of $r=3$, $k=2$ and $H=K_3$, or $H=K_4$, shows. 
Namely, it is not difficult to show that $\ram_{3,2}(K_3) = 5$, and the value $\ram_{3,2}(K_4) = 10$ follows from results of~\cite{CL78}.
Also for $\ram_{4,2}(K_3)$ the bounds from~\eqref{r_bound-new} are not sharp. 
Corollary~\ref{42K3} near the end of the present section states that $\ram_{4,2}(K_3)\geq 9$, and as we shall see next, this bound is sharp:

\begin{equation}\label{r42}
 \ram_{4,2}(K_3)\leq 9.
\end{equation}

Indeed, in order to see~\eqref{r42}, let  a $(4,2)$-colouring of $K_9$ be given. 
First suppose for some vertex~$v$ there is a colour $i$ appearing on  $5$ edges $vw_1,\ldots,vw_5$. 
If no triple $vw_iw_j$ is an $i$-coloured triangle, then  $w_1,\ldots,w_5$ span a $(3,2)$-colouring, which  has a monochromatic triangle as  $\ram_{3,2}(K_3) = 5$.

So we can assume every vertex is incident with exactly $4$ edges of each colour. 
That is, every colour spans a $4$-regular graph on 9 vertices. We claim each such graph has a triangle. 
Indeed, fixing any edge $uv$, if~$uv$ lies in no triangle, then  $N(u)\cap N(v)=\emptyset$. 
There is a vertex $w\notin N(u)\cup N(v)$, and $w$ has neighbours $u'\in N(u)$, $v'\in N(v)$. 
Since $u',v'$ have degree $4$, either we find a triangle, or we have  $N(u')-w=N(v)-v'$ and $N(v')-w=N(u)-u'$. 
As~$w$ has two more neighbours, we find a triangle. 
This  proves~\eqref{r42}.

\medskip

Furthermore, it is not overly difficult to calculate the values of $r,k,t$ for which $\ram_{r,k}(K_t)$ equals the most trivial bound from below,~$t$.

\begin{equation}\label{r,k,t}
\text{$\ram_{r,k}(K_t)=t$ if and only if $r > (r-k)\tbinom{t}{2}$.}
\end{equation}

For this, observe that each edge misses $r-k$ colours. If $r > (r-k)\tbinom{t}{2}$ holds, then, even if each edge misses disjoint sets of colours, there is still some colour appearing on all edges. So there must be a monochromatic $K_t$. 
On the other hand, if $(r-k)\tbinom{t}{2} \leq r$ we have enough edges to have them miss disjoint sets of colours, and thus $\ram_{r,k}(K_t)>t$.

Observe that in particular, for $t=3$, observation~\eqref{r,k,t} immediately gives that  
$$\ram_{r,k}(K_3)=3\text{ if and only if  }k > 2r/3.$$ So for instance, $\ram_{4,3}(K_3)=3$. 

See Section~\ref{smallSetRamsey} for a summary of small set-Ramsey numbers.

\medskip

Bounds for arbitrary $r$ and $k$ (not necessarily small) can be obtained by density arguments. 
More precisely, if $\frac kr$ surpasses $\frac{t-2}{t-1}$, we can estimate $\ram_{r,k}(K_t)$ using Tur\'an's Theorem:

\begin{prop} \label{boundTuran_f}
 Let $\varepsilon \in (0,1)$, let $t\geq 2$ and let $r > k > 0$. If $\frac{t-2}{t-1} = (1 - \varepsilon)\frac kr$, then $\ram_{r,k}(K_t)\leq \frac1\varepsilon +1$. 
 This bound is sharp if $k=r-1=t-1$ is a prime power, in which case $\ram_{r,k}(K_t) = k^2+1$.
\end{prop}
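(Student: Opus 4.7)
The plan is a short averaging/Tur\'an argument for the upper bound, together with the standard affine-plane colouring for sharpness.

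For the upper bound, let $\varphi$ be an $(r,k)$-colouring of $K_n$ with $n \geq \tfrac{1}{\varepsilon} + 1$. Since each edge carries $k$ colours, the total number of edge-colour incidences is $k\binom{n}{2}$; averaging over $[r]$ yields some colour $c$ that lies in $\varphi(e)$ for at least $\tfrac{k}{r}\binom{n}{2}$ edges $e$, and I would let $G_c \subseteq K_n$ be this monochromatic subgraph. Tur\'an's theorem forces any $K_t$-free graph on $n$ vertices to have at most $\tfrac{t-2}{t-1}\cdot\tfrac{n^2}{2}$ edges, so it is enough to check
\[
\tfrac{k}{r}\cdot\tfrac{n(n-1)}{2} \;>\; \tfrac{t-2}{t-1}\cdot\tfrac{n^2}{2}.
\]
A one-line manipulation using the hypothesis $\tfrac{t-2}{t-1} = (1-\varepsilon)\tfrac{k}{r}$ reduces this to $\tfrac{n-1}{n} > 1-\varepsilon$, i.e.\ $n > \tfrac{1}{\varepsilon}$, which holds by assumption. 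Hence $G_c$ contains a $K_t$, which is monochromatic for $\varphi$.

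For sharpness when $k = r-1 = t-1$ is a prime power, substituting into the defining relation gives $\varepsilon = 1/k^2$ and therefore $\tfrac{1}{\varepsilon}+1 = k^2+1$, so the bound specialises to $\ram_{k+1,k}(K_{k+1}) \leq k^2+1$. For the matching lower bound I would exploit the correspondence $\ram_{r,r-1}(H) = f_r(H)$ recalled in the introduction: it suffices to exhibit a $(k+1)$-colouring $\varphi'$ of $K_{k^2}$ in which every $K_{k+1}$ uses all $k+1$ colours. Taking the affine plane of order $k$ on a point set $V$ of size $k^2$, whose lines split into $k+1$ parallel classes $P_1,\dots,P_{k+1}$ each partitioning $V$ into $k$ lines of $k$ points, I would colour edge $uv$ by the index of the parallel class containing the unique line through $u$ and $v$. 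For any $k+1$ chosen vertices and any class $P_i$, pigeonhole on the $k$ lines of $P_i$ forces two of the vertices to lie on a common line of $P_i$, so the $K_{k+1}$ carries an edge of colour $i$, as required.

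I do not expect any real obstacle: the upper bound is a one-line averaging calculation fed into Tur\'an, and the lower bound is exactly the affine-plane construction already invoked in the footnote preceding Lemma~\ref{loboco}, only reinterpreted through $\ram_{r,r-1} = f_r$. The only spot where care is needed is the complementation step $\varphi(e) = [r] \setminus \{\varphi'(e)\}$, so I would double-check that ``monochromatic in $\varphi$'' translates precisely to ``avoids some colour in $\varphi'$'' before running the pigeonhole argument.
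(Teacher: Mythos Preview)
Your proposal is correct and follows essentially the same approach as the paper. For the upper bound the paper phrases the counting contrapositively (summing over all colours rather than averaging to pick one), and for the lower bound it defines the $(r,r-1)$-colouring $\varphi(e)=[r]\setminus\{i\}$ directly rather than passing through the $f_r$ correspondence, but these are cosmetic differences; the underlying Tur\'an argument and affine-plane construction are identical.
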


\begin{proof}
 For the first part, consider any $(r,k)$-colouring of $K_n$ without mono\-chromatic $K_t$. 
 Since every colour has at most $\frac{t-2}{t-1} \cdot \frac{n^2}{2}$ edges, we know that $k \binom{n}{2} \leq r\frac{t-2}{t-1} \cdot \frac{n^2}{2}$ and thus,  $n\leq\frac 1\varepsilon$.

 For the second part, let $\mathcal{P} = (P,\mathcal{L})$ be an affine plane of order $r$ and the complete graph $K = K_{k^2}$ with $V(K) = P$. 
 Colour edge $p_1p_2 \in E(K)$ with $[r]\setminus \{i\}$ if the line containing $p_1,p_2 \in P$ is in the $i$-th parallel class $L_i$ of $\mathcal{L}$. 
 Since for every $i \in [r]$ the $i$-th parallel class $L_i$ consists of $k$ lines, every set of $k+1=r$ points in $P$ contains at least two points that are contained in the same line $l \in L_i$, which proves that the defined colouring contains no monochromatic $K_r$.
\end{proof}

We conclude this section with lower bounds on the set-Ramsey number for odd cycles. 
The next result provides, in particular, the lower bound for~\eqref{r42}.
We remark that for $k$ fixed, and $r$ large enough, the bounds from Theorem~\ref{r_lowerboundC} can be improved, based on recent results from \cite{DJ16} (see Proposition~\ref{p_dj16}).

\begin{thm}
 \label{r_lowerboundC}
 If $\ell$ is odd and $k\geq 2$, then $\ram_{r,k}(C_\ell)> \max \{ 2^{\frac{r-1}{k-1}}, 2^{\lfloor\frac rk\rfloor - 1} (\ell-1) \}$.
\end{thm}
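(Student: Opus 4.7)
The plan is to prove the two lower bounds in the maximum separately, in each case exhibiting an explicit $(r,k)$-colouring of a suitable complete graph whose monochromatic subgraphs contain no odd cycle and, in particular, no $C_\ell$ (using that $\ell$ is odd).

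For the bound $\ram_{r,k}(C_\ell)>2^{(r-1)/(k-1)}$, I would construct an $(r,k)$-colouring on $n=2^{\lfloor(r-1)/(k-1)\rfloor}$ vertices in which every colour class is bipartite. Set $m=\lfloor(r-1)/(k-1)\rfloor$, identify $V(K_n)$ with $\{0,1\}^m$, and encode each vertex $v$ by the parity-extended $(k-1)$-fold repetition codeword $c(v)\in\{0,1\}^{(k-1)m+1}$ obtained by repeating each coordinate $v_i$ exactly $k-1$ times and appending the bit $v_1\oplus\cdots\oplus v_m$; pad with constant coordinates, if needed, so that $c(v)\in\{0,1\}^r$. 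If two input vectors differ in $t\geq 1$ positions, their codewords differ in $t(k-1)+[t\text{ odd}]$ positions, which is at least $k$ for every $k\geq 2$. Hence, for each edge $uv$ I can pick $k$ coordinates on which $c(u)$ and $c(v)$ disagree and assign exactly those $k$ colours to $uv$. By construction, the colour-$i$ edges lie inside the bipartite graph between $\{v:c(v)_i=0\}$ and $\{v:c(v)_i=1\}$, so every monochromatic subgraph is bipartite, hence $C_\ell$-free.

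For the bound $\ram_{r,k}(C_\ell)>2^{\lfloor r/k\rfloor-1}(\ell-1)$, I would follow the classical iterated-doubling lower bound for the multicolour Ramsey number of odd cycles and then lift it through the colour-splitting trick used in Lemma~\ref{obsred3}. Set $m=\lfloor r/k\rfloor$. Starting from $K_{\ell-1}$ with every edge coloured $1$, recursively build an $m$-colouring of $K_{2^{m-1}(\ell-1)}$ by, at each of the $m-1$ further steps, taking two disjoint copies of the current coloured graph, preserving the colouring inside each copy, and giving every edge between the two copies one and the same fresh colour. Each colour class is then either a disjoint union of copies of a previous colour class (hence, by induction, contains no $C_\ell$) or a complete bipartite graph (hence contains no odd cycle at all), so the final $m$-colouring has no monochromatic $C_\ell$. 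Finally, upgrade this $m$-colouring to an $(r,k)$-colouring by replacing each colour $i\in[m]$ with a fixed $k$-subset $S_i\subseteq[r]$, choosing the $S_i$ pairwise disjoint (possible since $mk\leq r$). The monochromatic components are unchanged, so no monochromatic $C_\ell$ appears.

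The only mildly nontrivial step is verifying the $k$-distance of the extended repetition code used in the first construction; this reduces to the elementary inequality $t(k-1)+[t\text{ odd}]\geq k$ for all $t\geq 1$ and $k\geq 2$. Everything else is bookkeeping.
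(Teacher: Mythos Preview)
Your proposal is correct and follows essentially the same two-part strategy as the paper: for the first bound, exhibit a binary code of length $r$, size $2^{\lfloor (r-1)/(k-1)\rfloor}$, and minimum distance at least $k$ so that each colour class is bipartite; for the second bound, use the standard doubling construction for odd cycles and then pass from an $\lfloor r/k\rfloor$-colouring to an $(r,k)$-colouring by replacing single colours with disjoint $k$-sets.

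The one genuine difference is in the code: the paper builds it inductively (duplicate the current set, append $0^{k-1}$ to originals and $1^{k-1}$ to clones, and flip one old coordinate of each clone), whereas you give an explicit closed-form description via the $(k-1)$-fold repetition code with an overall parity bit. Your version is cleaner and makes the minimum-distance check a one-line inequality, while the paper's version requires a small case analysis. Both constructions yield $2^{\lfloor (r-1)/(k-1)\rfloor}$ codewords, so both proofs establish the bound only with the floor in the exponent; the paper's statement without the floor is a minor imprecision that your proof inherits but does not introduce.
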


Before we turn to the proof of Theorem~\ref{r_lowerboundC}, let us note that
by using~\eqref{r42}, Theorem~\ref{r_lowerboundC}  has the following immediate corollary.

\begin{cor}\label{42K3}
	We have $\ram_{4,2}(K_3)= 9$.
\end{cor}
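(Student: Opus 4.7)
The plan is essentially to combine the two bounds that have already been assembled in this section. The upper bound $\ram_{4,2}(K_3) \leq 9$ is exactly the content of equation~\eqref{r42}, proved just after the statement of that equation via a case analysis on whether some vertex has $5$ incident edges of a single colour (reducing to $\ram_{3,2}(K_3) = 5$) or else every colour class is a $4$-regular graph on $9$ vertices (which is shown to contain a triangle). So nothing further is needed on that side.

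For the matching lower bound, I would simply plug $r=4$, $k=2$, and $\ell = 3$ into Theorem~\ref{r_lowerboundC}, noting that $K_3 = C_3$ so that $\ram_{4,2}(K_3) = \ram_{4,2}(C_3)$. Evaluating the maximum in the theorem gives
\[
 \max\bigl\{2^{(r-1)/(k-1)},\ 2^{\lfloor r/k\rfloor - 1}(\ell-1)\bigr\} \;=\; \max\{2^{3},\ 2^{1}\cdot 2\} \;=\; 8,
\]
so Theorem~\ref{r_lowerboundC} yields $\ram_{4,2}(K_3) > 8$, i.e.\ $\ram_{4,2}(K_3) \geq 9$.

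Putting the two inequalities together gives $\ram_{4,2}(K_3) = 9$, as claimed. There is no real obstacle here: the corollary is just the point where the lower bound from Theorem~\ref{r_lowerboundC} meets the upper bound already recorded in~\eqref{r42}, and the only thing to check is the arithmetic identifying the first term $2^{(r-1)/(k-1)} = 8$ as the binding one in the maximum.
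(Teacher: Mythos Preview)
Your proposal is correct and matches the paper's own argument exactly: the corollary is stated as an immediate consequence of combining the upper bound~\eqref{r42} with the lower bound from Theorem~\ref{r_lowerboundC} at $r=4$, $k=2$, $\ell=3$, and your arithmetic is right.
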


\begin{proof}[Proof of Theorem~\ref{r_lowerboundC}]
 To prove $\ram_{r,k}(C_{\ell}) > 2^{\lfloor r/k \rfloor - 1} (\ell-1)$ we use induction on $\lfloor r/k \rfloor$. 
 If  $\lfloor r/k \rfloor=1$, the assertion is trivial, as any $(r,k)$-colouring of $K_{\ell -1}$ will do. 
 For larger values of $\lfloor r/k \rfloor$, it suffices to take two copies of any $(r-k,k)$-coloured $K_{2^{\lfloor r/k \rfloor-2} (\ell-1)}$ without monochromatic $C_\ell$ (such a colouring exists by induction), and give $k$ previously unused colours to every edge between the two copies.

 For the bound $\ram_{r,k}(C_\ell) > n :=2^{\frac{r-1}{k-1}}$, it suffices to find an $(r,k)$-colouring of $K_n$ in which every colour induces a bipartite graph. 
 Such a colouring can be encoded in an $n$-subset $S_{n,r,k}$ of $\{0,1\}^r$ where any two $v,w\in S_{n,r,k}$ differ in at least $k$ entries. 
 (Just consider the complete graph on $S_{n,r,k}$, where we assign colour $i$ to an edge $vw$ if $v$ and $w$ differ at the $i$th entry. If an edge receives more than $k$ colours, just delete some.)

 A set $S_{n,r,k}$ as above clearly exists for $n=2$ and $r=k$, and one can construct a set $S_{2n,r+k-1,k}$ from $S_{n,r,k}$ inductively. 
 Do this by duplicating all members of $S_{n,r,k}$, adding $k-1$ extra entries $0$ to the `original' members, and adding $k-1$ extra entries $1$ to the `clones' (new members). 
 Also, we switch the $r$th entry of each clone: If it was a $0$, we make it a $1$, and if it was a $1$, we make it a $0$.
 Then the new set $S_{2n,r+k-1,k}$ is as desired: every pair of original members and every pair of clones differ in at least~$k$ entries because of the properties of the set $S_{n,r,k}$; every original member differs from its clone in the $k-1$ extra entries and in the switched entry; and finally, every original member differs from all other clones in the $k-1$ extra entries and in at least $k-1\geq 1$ of the original entries (only $k-1$ as one of them might be the one we switched).
\end{proof}

\section{Concluding remarks}\label{sec:conc_rem}

\subsection{Tree covers}
As seen in Section~\ref{sec:easy}, the best lower bound for the tree cover number of complete graphs we know is $\tc_{r,k}(K_n) \geq \lfloor \frac rk \rfloor - 1$, for $n\geq (r-1)^2$ (Lemma~\ref{loboco}). 
On the other hand Conjecture~\ref{conj:comp_r-k} holds for large $n$, although, even if true for all $n$, the conjecture is not tight.
The positive results leave us with the interval $[\lfloor \frac rk \rfloor - 1, r-k]$, if $n$ is large.
We believe that for large  values of $k$ the tree cover number should be closer to the lower bound of this interval.

\begin{problem}
	Determine $\tc_{r,k}(K_n)$ for all $r,k,n$.
\end{problem}

For complete bipartite graphs, for $r<2k$ we do not know more about the true value of $\tc_{r,k}(K_{n,m})$ than the bounds given in Theorems~\ref{thm:mainBip} and~\ref{thm:lowerbound}.

\begin{problem}
	Determine $\tc_{r,k}(K_{n,m})$ for all $r,k,n,m$.
\end{problem}

\subsection{Tree partitions}

In the traditional setting for $r$-coloured complete graphs, Erd\H os, Gy\'arf\'as and Pyber~\cite{EGP91} conjectured a stronger version of 
Conjecture~\ref{conj:Gya77}, namely, they conjectured that a {\it partition} into $r-1$ monochromatic trees should exist.
A weaker version of the latter conjecture, which replaces $r-1$ trees with $r$ trees, was confirmed by Haxell and Kohayakawa~\cite{HK96}, for $n$ sufficiently large compared to $r$.
It would be interesting to explore the tree partition problem for the more general setting of set-colourings. 
Note that the same easy arguments as employed here give that the minimum number of trees needed to partition any $(r,k)$-coloured graph lies in the interval $[\lfloor \frac rk \rfloor - 1, r-k+1]$, if $n$ is large.

One could also study a version this problem for set-colourings of underlying complete multipartite graphs. 
For $k=1$, this problem was addressed by Kaneko, Kano and Suzuki in~\cite{KKS05}.

\subsection{Path/Cycle partitions}\label{cycles}

Another recently very active area involving monochromatic substructures concerns path and cycle covers (see the survey~\cite{Gya16}). 
Let us state the problem here only in a version already adapted to set-colourings of $K_n$. 
The goal is to find the minimum number $\pp(r,k)$ such that in every $(r,k)$-colouring of $K_n$ there are $\pp(r,k)$ disjoint monochromatic paths which together cover all the vertices. The number $\pp(r,k)$ is often called the {\it path partition number}.
We can ask the same question replacing paths with cycles, the respective minimum $\cp(r,k)$ is then called the {\it cycle partition number}. Clearly, $\pp(r,k)\leq\cp(r,k)$, and $\pp(r,r)=\cp(r,r)=1$.

For $k=1$, the following values are known:  $\pp(2,1)=2=\cp(2,1)$, $\pp(3,1) = 3 < \cp(3,1)$, $\pp(4,1) \leq 8$ and it has been conjectured that $\pp(r,1)=r$, while  $\cp(r,1)\geq r+1$, and it is known that $\cp(r,1)$ is bounded from above by a function in $r$ (see~\cite{Gya16}). 

Now, the same trick as used for Lemma~\ref{obsred} (deleting $k-1$ colours from all edges) gives that 
 \begin{equation}\label{ciclos}
\text{$\pp(r,k)\leq \pp(r-k+1,1)$ and $\cp(r,k)\leq \cp(r-k+1,1)$.}
\end{equation}

In particular, these numbers are bounded by functions in $r$.  For  $(r,r-1)$-colourings, we obtain from~\eqref{ciclos} that $$\pp(r,r-1)\leq \cp(r,r-1)\leq \cp(2,1)=2.$$ Hence, $\pp(r,r-1),\cp(r,r-1)\in\{1,2\}$. At first glance, one might think that at least for $\pp(3,2)$, the  answer might be one, and not two, but the following proposition shows that the correct answer is always two.
 
 \begin{prop}
	For $r\geq 2$, we have that $\cp(r,r-1)= \pp(r,r-1)= 2$.
\end{prop}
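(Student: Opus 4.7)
Since the chain $\pp(r,r-1)\le\cp(r,r-1)\le\cp(2,1)=2$ recalled just before the proposition already supplies the upper bound, I only need the matching lower bound $\pp(r,r-1)\ge 2$; this forces $\cp(r,r-1)\ge 2$ as well, since any monochromatic Hamilton cycle contains a monochromatic Hamilton path (delete one edge). So the plan is to exhibit, for each $r\ge 2$, an $(r,r-1)$-colouring of some $K_n$ with no monochromatic Hamilton path. It is cleaner to work with the ``missing colour'' encoding: an $(r,r-1)$-colouring of $K_n$ corresponds bijectively to a function $\varphi':E(K_n)\to[r]$ (with $\varphi'(e)$ the unique colour absent from the set at $e$), and a monochromatic Hamilton path in colour $c$ is precisely a Hamilton path of $K_n - \varphi'^{-1}(c)$. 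Thus I need $\varphi'^{-1}(c)$ to be a Hamilton-path transversal for every $c\in[r]$.

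For the construction I would partition $V(K_n)=V_1\cup\cdots\cup V_r$, write $c(v)$ for the index of the class containing $v$, and set
\[
\varphi'(uv) = \min\bigl(c(u),c(v)\bigr).
\]
The edges with $\varphi'=c$ are then exactly the edges inside $V_c$ together with those from $V_c$ to $V_{c+1}\cup\cdots\cup V_r$; hence in $K_n-\varphi'^{-1}(c)$ the class $V_c$ becomes an independent set whose only neighbours lie in $L_c := V_1\cup\cdots\cup V_{c-1}$. For $c=1$ we have $L_1=\emptyset$, so the vertices of $V_1$ are isolated and any $|V_1|\ge 1$ already rules out a Hamilton path. For $c\ge 2$ I would apply the standard independent-set obstruction: in a hypothetical Hamilton path $P$, each vertex of $V_c$ has both of its $P$-neighbours (or its only $P$-neighbour, if it is an endpoint) in $L_c$, and double-counting the $V_c$--$L_c$ edges used by $P$ gives $2|V_c| - s_e \le 2|L_c|$ with $s_e\in\{0,1,2\}$, so $|V_c|\le|L_c|+1$.

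It therefore suffices to choose class sizes with $|V_c|>|L_c|+1$ for every $c\ge 2$. Taking $|V_1|=1$ and $|V_c| = |L_c|+2$ does this, and the recurrence $|L_{c+1}| = 2|L_c|+2$ solves to $|L_c| = 3\cdot 2^{c-2}-2$, giving $n = 3\cdot 2^{r-1}-2$. For $r=2$ this reduces to $|V_1|=1$, $|V_2|=3$, which is exactly the familiar star-plus-clique colouring on $K_4$. The one step needing genuine argument is the spoke/independent-set inequality $|V_c|\le|L_c|+1$; the recurrence for the sizes and the implication $\pp\ge 2\Rightarrow\cp\ge 2$ are then routine, completing the proof that $\cp(r,r-1)=\pp(r,r-1)=2$.
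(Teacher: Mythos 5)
Your construction is the same as the paper's: the missing-colour rule $\varphi'(uv)=\min(c(u),c(v))$ is exactly the paper's colouring $\varphi(uv)=[r]\setminus\{i\}$ for $u\in V_i$, $v\in V_j$, $i\le j$, and your size condition $|V_c|>|L_c|+1$ is the paper's $|V_i|>\sum_{j<i}|V_j|+1$. The argument is correct; you merely spell out the independent-set double count and explicit class sizes that the paper leaves implicit.
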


\begin{proof}
By~\eqref{ciclos}, we only have to show that $\pp(r,r-1)\geq 2$. For this, consider the following construction.

Let $V_1,\dots,V_r$ be pairwise disjoint sets such that $|V_i| > \sum_{j < i} |V_j|+1$, for $i \in \{2,3,\dots,r\}$. We define an $(r,r-1)$-colouring $\varphi$ of $K_n$ on the vertex set $\cup_{i \in [r]} V_i$ as follows: $\varphi(uv) = [r]\setminus \{i\}$ if $u \in V_i, v \in V_j$ and $i < j$; or if $u,v \in V_i$.
Notice that the only edges with colour $i$ and at least one endpoint in $V_i$ are those having their other endpoint in some set $V_j$, with $j < i$. So, since $|\cup_{j<i} V_j|+1 < |V_i|$, no path  of colour $i$ can cover all of $V_i$. 
\end{proof}

\subsection{Set-Ramsey numbers for complete graphs}\label{smallSetRamsey}

In the set-Ramsey numbers setting, let us give a short summary of what is know for $K_3$. For $r=2$, there is nothing interesting to say, since obviously $\ram_{2,2}(K_3)=\ram_{2,1}(K_3)=3$. For $r=3$ it is clear that $\ram_{3,3}(K_3)=3$, it is easy to see that $\ram_{3,2}(K_3)=5$, and it is well-known that $\ram_{3,1}(K_3)=6$. For $r=4$, we have $\ram_{4,4}(K_3)=\ram_{4,3}(K_3)=3$ and  $\ram_{4,2}(K_3)=9$,  as shown in Section~\ref{sec:ramsey}, and $\ram_{4,1}(K_3)$ is the usual 4-coloured Ramsey number for triangles, which is not known.

Therefore, the smallest unknown set-Ramsey number for $K_3$, in terms of $r$ and $k$, is  $\ram_{5,2}(K_3)$. We also do not know $\ram_{5,3}(K_3)$, while $\ram_{5,4}(K_3)=3$ by~\eqref{r,k,t}.
Considering $K_4$, as $\ram_{3,2}(K_4)=10$ by results of~\cite{CL78}, the smallest unknown values correspond to $\ram_{4,2}(K_4)$ and $\ram_{4,3}(K_4)$. 

\subsection{Set-Ramsey numbers for cycles}

The Bondy-Erd\H os conjecture states that $\ram_r(C_\ell) = 2^{r-1}(\ell - 1) + 1$ for every odd $\ell \geq 3$. 
Recently, Jenssen and Skokan~\cite{JS16} proved that the Bondy-Erd\H os conjecture holds for fixed $r$ and sufficiently large odd $n$. 
However, Day and Johnson \cite{DJ16} disprove the Bondy-Erd\H os conjecture by showing that for every odd  $\ell \geq 3$ exist $\varepsilon > 0$ and sufficiently large $r$ such that $\ram_r(C_{\ell}) > (2+\varepsilon)^{r-1}(\ell  - 1)$. 
We can imitate their construction to see that, analogously:

\begin{prop}\label{p_dj16}
	For all $k \geq 2$ and odd $\ell$ there are  $\varepsilon = \varepsilon(\ell) > 0$ and $f = f(\ell) > 1$ such that for sufficiently large $r$ we have $\ram_{r,k}(C_{\ell}) > (2+\varepsilon)^{r-f}(\ell - 1)$.
\end{prop}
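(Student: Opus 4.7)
The plan is to mimic the iterative construction of Day and Johnson~\cite{DJ16}, adapting each step from ordinary $r$-colourings to $(r,k)$-colourings. Recall that in~\cite{DJ16}, the authors start from a small base graph and repeatedly apply a \emph{boosting step} that adds a fixed number $s=s(\ell)$ of new colours while multiplying the vertex count by a factor strictly greater than $2^s$. Iterating this step gives, for each odd $\ell\geq 3$, constants $\varepsilon_0=\varepsilon_0(\ell)>0$ and $r_0=r_0(\ell)$ such that for every $r'\geq r_0$ there is an $r'$-colouring of $K_{n'}$ with $n' > (2+\varepsilon_0)^{r'-1}(\ell-1)$ and no monochromatic $C_\ell$.

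For the $(r,k)$-setting, I would take as a base case the trivial $(k,k)$-colouring of $K_{\ell-1}$ in which every edge receives the full colour set $[k]$; since $K_{\ell-1}$ contains no $C_\ell$ at all, this colouring is monochromatic-$C_\ell$-free. I would then adapt the boosting step as follows: given an $(r_1,k)$-coloured $K_n$ with no monochromatic $C_\ell$, take the same blow-up over Day and Johnson's auxiliary gadget, keep the $(r_1,k)$-colouring on each within-copy $K_n$, and assign to each between-copy edge a $k$-subset of the $s'\geq k$ new colours, chosen so that each new colour class, read individually, is contained in the corresponding $C_\ell$-free structure of~\cite{DJ16}. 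Iterating produces an $(r,k)$-colouring of $K_N$ with no monochromatic $C_\ell$ and $N > (\ell-1)(2+\delta)^{r-k}$ for some $\delta = \delta(\ell) > 0$. Choosing $\varepsilon = \varepsilon(\ell) < \delta$ and any $f = f(\ell) > 1$, the bound $\ram_{r,k}(C_\ell) > (2+\varepsilon)^{r-f}(\ell-1)$ follows for all sufficiently large $r$ (the threshold depending on $k$ and $\ell$).

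The main obstacle is designing the $k$-subset assignment inside the boosting step. In the $k=1$ setting, each between-copy edge carries a single new colour, so it is immediate that each new colour class is the blow-up of a $C_\ell$-free subgraph of the auxiliary gadget. For $k\geq 2$, each between-copy edge must carry exactly $k$ new colours, so the assignment must be chosen so that no single new colour, considered in isolation, contains a monochromatic $C_\ell$. This amounts to equipping Day and Johnson's auxiliary gadget with a $k$-covering of its edge set by subgraphs whose blow-ups are individually $C_\ell$-free. This is a technical verification rather than a fundamentally new idea: Day and Johnson's gadget is itself constructed precisely to have the blow-up-avoidance property per colour, and a $k$-covering merely groups $k$ of their colour classes on each edge instead of one. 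The bookkeeping of colours across iterations then delivers the constant $f=f(\ell)$ as required.
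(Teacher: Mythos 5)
Your overall strategy is the same as the paper's: iterate the Day--Johnson product construction, placing a set-coloured high-odd-girth gadget on the between-copy edges at each step and keeping the previous colouring inside the copies. The gap sits exactly where something new is needed, namely the existence of that set-coloured gadget, which you dismiss as ``a technical verification'' obtained by ``grouping $k$ of their colour classes on each edge instead of one''. This does not work as described. If each between-copy edge is to carry $k$ of the $s'$ new colours while the palette stays at $s'$, then each new colour class must contain on average a $k/s'$ fraction of the gadget's edges, i.e.\ it is a union of what used to be several colour classes --- and unions of graphs of large odd girth need not have large odd girth (already for the hypercube-type colourings underlying such gadgets, the union of two bipartite colour classes contains triangles). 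If instead you keep each colour class equal to an old one and supply the required $k$-sets by splitting every colour into $k$ parallel copies, the number of new colours per step becomes $ks'$ while the vertex multiplier stays near $2^{s'}$, so the per-colour growth rate drops to about $2^{1/k}<2$ and you recover only the trivial bound $2^{\lfloor r/k\rfloor-1}(\ell-1)$ of Theorem~\ref{r_lowerboundC}, not $(2+\varepsilon)^{r-f}(\ell-1)$. The quantitative heart of the proposition is precisely that there is an $(s',k)$-colouring of $K_{2^{s'}+1}$ in which every colour class has odd girth greater than $\ell$ --- that is, $s'$ classes, each of edge density about $k/s'$ and each of large odd girth, on $2^{s'}+1$ vertices --- and this requires re-running Day and Johnson's gadget construction in the set-colouring setting rather than post-processing their output. (To be fair, the paper's proof is itself a sketch and asserts this lemma with the phrase ``as in~\cite{DJ16}''; but it isolates it as the statement to be proved, whereas your proposal replaces it with a justification that is incorrect.)

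The remaining bookkeeping in your proposal is essentially sound. Your base case and the leftover colours $r-k\bmod s'$ can be absorbed into $f(\ell)$ using the monotonicity of $\ram_{r,k}$ in $r$ (the paper instead feeds the remainder colours into the colouring of Theorem~\ref{r_lowerboundC}), and the final step from $N>(\ell-1)(2+\delta)^{r-k}$ to the stated bound is correct, since for fixed $f>1$ and $\varepsilon<\delta$ one has $(2+\delta)^{r-k}\geq(2+\varepsilon)^{r-f}$ for all sufficiently large $r$. The gadget is the only genuine issue, but it is the central one.
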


We include a sketch of the proof of Proposition~\ref{p_dj16} for readers who are familiar with the construction of Day and Johnson in~\cite{DJ16}.

\begin{proof}[Sketch of a proof for Proposition~\ref{p_dj16}]
	Let $k \geq 2$ be fixed. As in \cite{DJ16} one can show the existence of $(r,k)$-colourings of $K_{2^r+1}$ with arbitrarily long odd girth. Let $f'$ be the smallest integer such that there is an $(f',k)$-colouring $\varphi_1$ of $K_{2^{f'}+1}$ with odd girth strictly greater than $\ell$. Let $\varphi_2$ be the $(c+1,k)$-colouring of the complete graph on $2^{\lfloor c/k \rfloor}(\ell-1)$ vertices avoiding a monochromatic $C_{\ell}$, as given by Theorem \ref{r_lowerboundC}. If $r = mf' + c$, with $f' > c \geq 0$, then, following the construction in \cite{DJ16}, one can define an $(r,k)$-colouring of the complete graph on
	$(2^{f'} + 1)^m \cdot 2^{\lfloor c/k \rfloor} (\ell - 1) > (2+\varepsilon)^{r-f}$ vertices.
\end{proof}

\section*{Acknowledgments}

We would like to thank two anonymous referees for their careful reading and helpful suggestions.

Both authors acknowledge support by Millenium Nucleus Information and Coordination in Networks ICM/FIC RC130003.
The first author also was supported by CONICYT Doctoral Fellowship 21141116.
The second author also received support by Fondecyt Regular grant 1140766 and CMM-Basal.

%\bibliography{tree-cover}
\bibliographystyle{acm}

\end{document}